\theoremstyle{plain}
\newtheorem{theorem}{Theorem}
\newtheorem{corollary}[theorem]{Corollary}
\newtheorem{proposition}[theorem]{Proposition}
\theoremstyle{definition}
\newtheorem{remark}[theorem]{Remark}
\newtheorem{example}[theorem]{Example}
\newtheorem{definition}[theorem]{Definition}
\newcommand{\R}{\mathbb{R}}
\newcommand{\N}{\mathbb{N}}
\newcommand{\Z}{\mathbb{Z}}
\newcommand{\T}{\mathbb{T}}
\begin{document}

\title{Inequalities and majorisations for the Riemann--Stieltjes
integral on time scales\thanks{Submitted 30-Apr-2009;
revised 15-Feb-2010; accepted 24-Mar-2010;
for publication in Math. Inequal. Appl.}}

\author{Dorota Mozyrska$^1$\\
        \url{d.mozyrska@pb.edu.pl}
        \and
        Ewa Paw\l uszewicz$^{1,2}$\\
        \url{ewa@ua.pt}
        \and
        Delfim F. M. Torres$^2$\\
		\url{delfim@ua.pt}}

\date{$^{1}$Faculty of Computer Science\\
        Bia{\l}ystok University of Technology\\
		15-351 Bia\l ystok, Poland\\[0.3cm]
      $^{2}$Department of Mathematics\\
		University of Aveiro\\
		3810-193 Aveiro, Portugal}

\maketitle


\begin{abstract}
We prove dynamic inequalities of majorisation type
for functions on time scales. The results
are obtained using the notion
of Riemann--Stieltjes delta integral and
give a generalization of
[App. Math. Let. {\bf 22} (2009), no.~3, 416--421]
to time scales.

\bigskip

\noindent \textbf{Keywords:}
time scales, Riemann--Stieltjes delta integrals,
dynamic inequalities, inequalities of majorisation type.

\medskip

\noindent \textbf{Mathematical Subject Classification 2010:}
26D15; 26E70; 39A12.

\end{abstract}


\section{Introduction}

In the literature one can find many results known as
\emph{Majorisation Theorems}. In the recent papers
\cite{BCD,dragomir} inequalities of majorisation type for convex functions
and Stieltjes integrals are given.
The main goal of the present note is
to unify and generalize such discrete-time
and continuous-time inequalities by means of the notion
of Riemann--Stieltjes integral on time scales \cite{MPT,Rev[1]}.

The theory and applications of delta derivatives and integrals
on time scales is a relatively new area that
is receiving an increase of interest and attention \cite{book:ts1}.
The concept of Riemann--Stieltjes integration on time scales
was introduced in 1992 by S.~Sailer \cite{Rev[1]}
in a thesis under the direction of one of the founders of time
scales calculus, B. Aulbach. Since 1992,
several other works on the subject appeared
--- see, \textrm{e.g.}, \cite{Rev[3],Rev[2],MPT}.

One important and very active subject being developed
within the theory of time scales consists in the study
of inequalities --- see \cite{Rev[4],si:ts,GronTS,SA:F:T,Gruss,czebyszew}
and references therein. To the best of our knowledge
all the integral inequalities available
in the literature of time scales are, however,
formulated using the Riemann integral on time scales.
Here we use the more general Riemann--Stieltjes
integral on time scales \cite{MPT,Rev[1]}.

After some preliminaries on the Riemann--Stieltjes integral on time scales
\cite{MPT,Rev[1]} (Section~\ref{sec:pre:not}), where we recall the main
definitions and results necessary in the sequel, we begin by generalizing
the notion of Riemann--Stieltjes delta integral for double integrals,
proving its main properties (Section~\ref{subsec:RS-delta-int}).
The main contributions of the paper are
the new dynamic inequalities for Riemann--Stieltjes
delta integrals obtained in Section~\ref{subsec:ineq}
that generalize the results of \cite{BCD},
and the two majorisation theorems of Section~\ref{subsec:mt}
that extend the results of \cite{dragomir} to the context of time scales.

We are not aware of any paper in the literature about
majorisation inequalities for Stieltjes integrals on time scales.
Our results seem to be the first in this direction.


\section{Preliminaries and Notation}
\label{sec:pre:not}

Through the text $\T$, $\T_1$, and $\T_2$ denote time scales.
Let $a,b\in\T$ and $a<b$. We distinguish $[a,b]$ as a real interval
and we define $[a, b]_{\T}:=[a,b]\cap\T$. In that
sense $[a,b]=[a,b]_{\R}$. Thus, $[a,b]_{\T}$ is a nonempty
and closed (bounded) set consisting of points from $\T$.

We recall the notion of Riemann--Stieltjes integral on a
time scale. For more we refer the reader to \cite{MPT}.
A partition of $[a, b]_{\T}$ is any finite ordered subset
\[P=\{t_0, t_1,\ldots, t_n\}\subset [a,b]_{\T}, \ \
\makebox{where} \  a=t_0<t_1<\ldots<t_n=b\, .\]
Each partition $P=\{t_0, t_1,\ldots, t_n\}$ of $[a,b]_{\T}$ decomposes it into
subintervals $[t_{i-1},t_i)_{\T}$, $i=1,2,\ldots, n$, such
that for $i\neq k$ one has $[t_{i-1},t_i)_{\T}\cap [t_{k-1},t_k)_{\T}=\emptyset$.
Each such decomposition of $[a,b]_{\T}$ into subintervals
is called a subdivision of $[a, b]_{\T}$. By $\Delta
t_i=t_i-t_{i-1}$ we denote the length of the $i$th subinterval in
the partition $P$. By $\mathcal{P}([a,b]_{\T})$
we denote the set of all partitions of $[a, b]_{\T}$.
Let $P_n$, $P_m \in\mathcal{P}([a, b]_{\T})$. If $P_n\subset P_m$
we call $P_m$ a \emph{refinement of $P_n$}.
If $P_n,P_m$ are independently chosen, then
the partition $P_n\cup P_m$ is a common refinement of $P_n$ and $P_m$.
This procedure is introduced in \cite{book:ts1}.

Let $g$ be a real-valued non-decreasing function on $[a, b]_{\T}$.
For the partition $P$ we define the set
\[g(P)=\{g(a)=g(t_0), g(t_1), \ldots,
g(t_{n-1}),g(t_n)=b\}\subset g([a, b]_{\T})\,.\]
Then, $\Delta g_i=g(t_i)-g(t_{i-1})$ is non negative and
$\sum\limits_{i=1}^n\Delta g_i=g(b)-g(a)$. Note that $g(P)$ is
a partition of $[g(a),g(b)]_{\R}=\bigcap\{J: g(P)\subset J\}$.
It is clear that even for the class of rd-continuous functions
defined on an arbitrary time scale, the image $g([a,b]_{\T})$
does not need to be a real interval
(indeed, our interval $[a, b]_{\T}$ may contain scattered points).

We now recall the definitions of lower and upper sums and
the notion of Darboux--Stieltjes sum (for more details see \cite{MPT}).
Let $f$ be a real-valued and  a bounded  function on the interval
$[a, b]_{\T}$. Let us take the partition $P=\{t_0, t_1,\ldots, t_n\}$ of $[a,b]_{\T}$.
Let $m_i=\inf_{t\in [t_{i-1},t_i)_{\T}} f(t)$ and $M_i=\sup_{t\in [t_{i-1},t_i)_{\T}} f(t)$,
$i=1,2,\ldots,n$. The upper Darboux--Stieltjes
sum of $f$ with respect to the partition $P$, denoted by $U(P,f,g)$,
is defined by $U(P,f,g)=\sum_{i=1}^nM_i\Delta g_i$ and the lower
Darboux--Stieltjes sum of $f$ with respect to the partition $P$,
denoted by $L(P,f,g)$, is defined by $L(P, f, g)=\sum_{i=1}^n m_i\Delta g_i$.

\begin{definition}[\cite{MPT}]
\label{R-S_def}
The upper Darboux--Stieltjes $\Delta$-integral from $a$ to $b$
with respect to function $g$ is defined by
$\overline{\int_a^b}f(t)\Delta g(t)
=\inf_{P\in \mathcal{P}\left([a,b]_{\T}\right)} U(P, f, g)$.
The lower Darboux--Stieltjes $\Delta$-integral from $a$ to
$b$ with respect to function $g$ is defined by
$\underline{\int_a^b}f(t)\Delta g(t)=\sup_{P\in \mathcal{P}([a, b]_{\T})} L(P, f, g)$.
If  $\overline{\int_a^b}f(t)\Delta
g(t)=\underline{\int_a^b}f(t)\Delta g(t)$, then we say that $f$ is
$\Delta$-integrable with respect to $g$ on $[a, b]_{\T}$, and the common
value of the integrals is  denoted by $\int_a^bf(t)\Delta
g(t)=\int_a^bf\Delta g$ and it is called the Riemann--Stieltjes
(or just Stieltjes) $\Delta$-integral of $f$ with respect to $g$ on
$[a, b]_{\T}$.
\end{definition}

From now on we assume that $f$ and $g$ are arbitrary real-valued
bounded functions on $[a,b]_{\T}$, where $a,b\in\T$
and $g$ is non-decreasing on $[a,b]_{\T}$.
Let us consider the partition
$P=\{t_0, t_1,\ldots, t_n\}$ of $[a,b]_{\T}$ and let
$X=\{x_1,\ldots,x_n\}$ denote an arbitrary selection
of points from $[a,b]_{\T}$ with $x_i\in [t_{i-1},t_i)_{\T}$,
$i=1,2,\ldots, n$. We define
\begin{equation}\label{eq:riem_sum}
 S_g(f, P, X)=\sum_{i=1}^nf(x_i)\left(g(t_i)-g(t_{i-1})\right)
\end{equation}
as a \emph{Riemann--Stieltjes $\Delta$-sum for $f$ with respect to $g$}.

\begin{definition}\label{def:s1}
We say that $f$ is Riemann--Stieltjes $\Delta$-integrable
with respect to $g$ and write $f\in \mathcal{S}([a,b]_{\T},g)$ if and only if
there exists a number $\mathcal{I}\in\R$ such that for every $\varepsilon>0$
there is a partition $P^*$ for which $|S_g(f,P,X)-\mathcal{I}|<\varepsilon$
for all refinements $P\supset P^*$ and all possible selections
of points $X$. If such a number exists,
it is unique, and we define $\int_a^bf\Delta g=\mathcal{I}$.
\end{definition}

Note that if $g$ is non-decreasing,
then $L(P, f, g)\leq S_g(f, P, X) \leq U(P, f, g)$ for any $P$ and $X$.
Let $\T_1$, $\T$ be time scales and $\psi:\T_1\rightarrow\T$
be a rd-continuous non-decreasing map such that for
$t_1\in [\alpha,\beta]_{\T_1}$, $a=\psi(\alpha)$, $b=\psi(\beta)$.
Then, because of the existing bijection between partitions of intervals
$[a,b]_{\T}$ and $[\alpha,\beta]_{\T_1}$ and between selections
of points from the respective intervals, the following holds:
\begin{equation*}
\int_a^bf(t)\Delta t=\int_{\alpha}^{\beta} f(\psi(t_1))\Delta_1g(\psi(t_1))\,.
\end{equation*}

The proof of Proposition~\ref{prop}
follows directly from \eqref{eq:riem_sum}
and Definition~\ref{def:s1}.

\begin{proposition}
\label{prop}
Let $g$ be non-decreasing on $[a,b]_{\T}$ and $f$
be Riemann--Stieltjes $\Delta$-integrable
with respect to $g$ on $[a,b)_{\T}$. Then,

a) $\int_a^b\Delta g(t)=g(b)-g(a)$;

b) $\int_a^bf(t)\Delta g(t)=0$ for $g$ constant;

c)  $\int_a^{\sigma(a)}f(t)\Delta g(t)=f(a)(g^{\sigma}(a)-g(a))$;

d) $\int_a^b\alpha f(t)\Delta (\beta g(t))=\alpha\beta\int_a^bf(t)\Delta g(t)$,
$\alpha, \beta \in \mathbb{R}$.
\end{proposition}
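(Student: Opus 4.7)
The plan is to verify each of the four statements by writing out the Riemann--Stieltjes $\Delta$-sum in \eqref{eq:riem_sum} for an arbitrary partition $P=\{t_0,\dots,t_n\}$ and selection $X=\{x_1,\dots,x_n\}$, and then invoking Definition~\ref{def:s1}. Nothing beyond direct substitution and the telescoping identity $\sum_{i=1}^n(g(t_i)-g(t_{i-1}))=g(b)-g(a)$ should be needed, so the main task is organisational rather than technical.

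For part~(a) I would take $f\equiv 1$, observe that $S_g(1,P,X)=\sum_{i=1}^n (g(t_i)-g(t_{i-1}))=g(b)-g(a)$ for every $P$ and every choice of $X$ (in particular the sum is independent of the refinement), and conclude by Definition~\ref{def:s1} that the integral equals $g(b)-g(a)$. Part~(b) is immediate: if $g$ is constant then $\Delta g_i=0$ for every $i$ and every partition, so $S_g(f,P,X)=0$ identically, whence the integral vanishes.

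For part~(c) the key observation is that $[a,\sigma(a))_{\T}=\{a\}$, because by definition $\sigma(a)$ is the smallest element of $\T$ strictly greater than $a$. Consequently the only partition of $[a,\sigma(a)]_{\T}$ is $P^{*}=\{a,\sigma(a)\}$, it admits no proper refinement inside $\T$, and the only admissible selection is $x_1=a$. Thus $S_g(f,P^{*},X)=f(a)(g(\sigma(a))-g(a))=f(a)(g^{\sigma}(a)-g(a))$, and Definition~\ref{def:s1} gives the claim. Part~(d) follows from the linearity of the sum in both $f$ and $g$: since $\beta g(t_i)-\beta g(t_{i-1})=\beta\,\Delta g_i$, we have $S_{\beta g}(\alpha f,P,X)=\alpha\beta\, S_g(f,P,X)$, and passing to the limit in $P$ using Definition~\ref{def:s1} preserves this scalar factor.

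I do not anticipate any real obstacle; the only point that deserves care is part~(c), where one must justify that no partition other than $\{a,\sigma(a)\}$ exists and that the selection point is forced. Once this is noted, each item is a one-line computation from \eqref{eq:riem_sum}, as the statement itself already announces.
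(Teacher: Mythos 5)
Your proof is correct and matches the paper's intent exactly: the paper offers no argument beyond the remark that Proposition~\ref{prop} ``follows directly from \eqref{eq:riem_sum} and Definition~\ref{def:s1}'', and your write-up is precisely that direct verification (telescoping for (a), vanishing increments for (b), the forced partition and selection for (c), bilinearity of the sum for (d)). The only cosmetic point is in part (c): describing $\sigma(a)$ as the smallest element of $\T$ strictly greater than $a$ presupposes that $a$ is right-scattered; if $a$ is right-dense the interval degenerates to $\{a\}$ and both sides of the identity are trivially zero, so the claim still holds.
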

Note that if $f$ is rd-continuous and $g$ has its
$\Delta$-derivative
also as a rd-continuous function, then we can write the approximating
sum \eqref{eq:riem_sum} for $fg^{\Delta}$  with respect to the constant
function of value $1$ in the form $S_1(fg^{\Delta},P,X)=\sum_{i=1}^n
f(x_i)g^{\Delta}(x_i)\Delta t_i$. Using the mean value theorem
\cite{book:ts1}, we conclude with the following result:

\begin{theorem}[\cite{MPT}]
\label{dor:transition}
Let $a$, $b\in\T$. Suppose that $g$ is a non-decreasing
function such that $g^{\Delta}$ is continuous on $[a,b)_{\T}$
and $f$ is a real bounded function on $[a,b]_{\T}$.
Then, $f\in\mathcal{S}(g, [a,b]_{\T})$ if and only if
$fg^{\Delta}\in \mathcal{S}(g,[a,b]_{\T})$. Moreover,
\begin{equation*}
 \int_a^bf(t)\Delta g(t)=\int_a^bf(t)g^{\Delta}(t)\Delta t \, .
\end{equation*}
\end{theorem}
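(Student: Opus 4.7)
The plan is to compare, for each partition $P=\{t_0,\ldots,t_n\}$ of $[a,b]_{\T}$ and each selection $X=\{x_1,\ldots,x_n\}$, the Riemann--Stieltjes $\Delta$-sum $S_g(f,P,X)=\sum_{i=1}^n f(x_i)\Delta g_i$ with the Riemann $\Delta$-sum $S_1(fg^{\Delta},P,X)=\sum_{i=1}^n f(x_i)g^{\Delta}(x_i)\Delta t_i$ for the \emph{same} $P$ and $X$. If their difference can be made uniformly small on sufficiently fine refinements, then by Definition~\ref{def:s1} the existence and the numerical value of $\int_a^b f\,\Delta g$ and of $\int_a^b fg^{\Delta}\,\Delta t$ are forced simultaneously, yielding both the equivalence and the stated equality.

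The bridge between the two sums is the mean value theorem for $\Delta$-derivatives \cite{book:ts1}, which on every subinterval of $P$ gives
\[
\bigl(\inf_{\tau\in[t_{i-1},t_i)_{\T}}g^{\Delta}(\tau)\bigr)\Delta t_i \;\leq\; \Delta g_i \;\leq\; \bigl(\sup_{\tau\in[t_{i-1},t_i)_{\T}}g^{\Delta}(\tau)\bigr)\Delta t_i.
\]
Since $g^{\Delta}(x_i)$ belongs to the same range, this yields $|\Delta g_i-g^{\Delta}(x_i)\Delta t_i|\leq \omega_i(g^{\Delta})\,\Delta t_i$, where $\omega_i(g^{\Delta})$ denotes the oscillation of $g^{\Delta}$ on $[t_{i-1},t_i)_{\T}$. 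Weighting by $|f(x_i)|\leq K:=\sup_{[a,b]_{\T}}|f|$ and summing delivers
\[
|S_g(f,P,X)-S_1(fg^{\Delta},P,X)|\;\leq\;K\sum_{i=1}^{n}\omega_i(g^{\Delta})\,\Delta t_i.
\]
Given $\varepsilon>0$, I would then use uniform continuity of $g^{\Delta}$ on $[a,b]_{\T}$ to select a partition $P^{*}$ on whose subintervals each $\omega_i(g^{\Delta})$ is below $\varepsilon/(2K(b-a))$; every refinement $P\supset P^{*}$ inherits this estimate and keeps the displayed difference under $\varepsilon/2$. Combining with the $\varepsilon/2$-defining bound for whichever of the two integrals is assumed to exist, along refinements of a common $P^{*}$, transfers both integrability and numerical value to the other side.

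The main obstacle I anticipate is packaging the refinement argument on an arbitrary time scale, where $[a,b]_{\T}$ can interleave scattered and dense portions. The scattered contributions give no trouble: if $t_i=\sigma(t_{i-1})$ then $\Delta g_i=g^{\Delta}(t_{i-1})\mu(t_{i-1})$ exactly, so such terms contribute zero error. The substance is in arranging $P^{*}$ so that on the dense subintervals the oscillations of $g^{\Delta}$ are simultaneously small, which is precisely what continuity of $g^{\Delta}$ there delivers; once this bookkeeping is done, the rest of the argument is formally identical to its classical Riemann--Stieltjes counterpart on $\R$.
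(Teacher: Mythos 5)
Your proof is correct and follows essentially the route the paper itself indicates: compare $S_g(f,P,X)$ with $S_1(fg^{\Delta},P,X)$ for the same partition and selection, use the time-scale mean value theorem to trap $\Delta g_i/\Delta t_i$ between the infimum and supremum of $g^{\Delta}$ on each subinterval, and kill the resulting oscillation sum by refinement (the paper only sketches this step and defers the details to \cite{MPT}). The one point to tighten is that you invoke uniform continuity of $g^{\Delta}$ on $[a,b]_{\T}$ while the hypothesis gives continuity only on $[a,b)_{\T}$; combined with the standard time-scale partition lemma (each subinterval either shorter than $\delta$ or a single jump $t_i=\sigma(t_{i-1})$, the latter contributing zero error as you note), this is a presentational rather than a substantive gap.
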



\section{Main Results}
\label{sec:MR}

In order to generalize the results of \cite{BCD}
to an arbitrary time scale, one needs first
to extend the Riemann--Stieltjes $\Delta$-integral
to functions of two-variables.
Properties of the double Riemann $\Delta$-integral
and for multiple Lebesgue integrals on time scales
were developed in
\cite{multiple_int,LebesgueInt:ReviewerAsked,double_int}.


\subsection{The double Riemann--Stieltjes delta integral}
\label{subsec:RS-delta-int}

Let $a$, $b \in \T_1$, $c$, $d \in \T_2$, where
$a < b$, $c < d$, and
$R=[a, b)_{\T_1}\times [c,d)_{\T_2}=\{(t,s):
t\in [a,b), s\in [c,d), t\in\T_1, s\in\T_2\}$.
Let $g_i:\T_i\rightarrow\R$, $i=1,2$, be two
non-decreasing functions on $[a,b]_{\T_1}$ and $[c,d]_{\T_2}$, respectively.
Let $f: \T_1\times\T_2\rightarrow\R$ be bounded on $R$.
Let us consider two partitions $P_1=\{t_0, t_1,\ldots, t_n\}$ of $[a,b]_{\T_1}$ and
$P_2=\{s_0, s_1,\ldots, s_k\}$ of $[c,d]_{\T_2}$ and let $X_1=\{x_1,\ldots,x_n\}$
denote an arbitrary selection of points from $[a,b]_{\T_1}$
with $x_i\in [t_{i-1},t_i)_{\T_1}$, $i=1,2,\ldots, n$.
Similarly, let $X_2=\{y_1,\ldots,y_k\}$ denote an arbitrary
selection of points from $[c,d]_{\T_2}$ with $y_j
\in [s_{j-1},s_j)_{\T_2}$, $j=1,2,\ldots, k$.
We define
\begin{equation}
\label{eq:riem_sum1}
\overline{S}_{g_1,g_2}(f, P_1, P_2, X_1, X_2)=\sum_{i=1}^n\sum_{j=1}^kf(x_i,y_j)\left(g_1(t_i)
-g_1(t_{i-1})\right)\left(g_2(s_j)-g_2(s_{j-1})\right)
\end{equation}
as the Riemann--Stieltjes $\Delta$-sum of $f$ with respect to functions
$g_1$ and $g_2$ and partitions $P_1\in \mathcal{P}([a,b]_{\T_1})$
and $P_2\in \mathcal{P}([c,d]_{\T_2})$.

\begin{definition}
We say that $f$ is Riemann--Stieltjes $\Delta$-integrable with respect
to $g_1$ and $g_2$ over $R$ if there exists a number $\mathcal{I}\in\R$ such that
for every $\varepsilon>0$ there are partitions $P_1^*$ and $P_2^*$
for which $|\overline{S}_{g_1,g_2}(f, P_1, P_2, X_1, X_2)-\mathcal{I}|<\varepsilon$ 
for all refinements
$P_1\supset P_1^*$ and $P_2\supset P_2^*$ and all possible selections
of points $X_1$ and $X_2$ corresponding to $P_1$ and $P_2$, respectively.
If such a number $\mathcal{I}$ exists, it is unique,
and we define \[\iint_R f(t,s)\Delta_{1,2}(g_1\times g_2)=\mathcal{I}\,.\]
\end{definition}

We can extend the properties of Proposition~\ref{prop}
using non-decreasing functions $g_1$ and $g_2$.
The following proposition is obtained,
\emph{mutatis mutandis},
from the proofs of similar properties of the
Riemann--Stieltjes $\Delta$-integral \cite{MPT}.

\begin{proposition}
Let $g_1$ and $g_2$ be non-decreasing functions
respectively on $[a,b]_{\T_1}$ and $[c,d]_{\T_2}$,
and let $f$ be Riemann--Stieltjes $\Delta$-integrable with respect
to $g_1$ and $g_2$ on $R=[a,b)_{\T_1}\times [c,d)_{\T_2}$. Then,

a) $\iint_R A\Delta_{1,2} \left(g_1\times g_2\right)
=A\left(g_1(b)-g_1(a)\right)\left(g_2(d)-g_2(c)\right)$, $A$ a constant;\\

b) $\iint_Rf(t,s)\Delta_{1,2} \left(g_1\times g_2\right)=0$
when $g_1$ or $g_2$ are constant;\\

c) with $b=\sigma_1(a)$ and $d=\sigma_2(c)$ one has
\[\iint_Rf(t,s)\Delta_{1,2} \left(g_1\times g_2\right)
=f(a,c)(g_1^{\sigma_1}(a)-g_1(a))(g_2^{\sigma_2}(c)-g_2(c)) \, ;\]

d) $\iint_R\alpha f(t,s)\Delta_{1,2} [\beta \left(g_1 \times g_2\right)]
=\alpha\beta\iint_Rf(t,s)\Delta_{1,2} \left(g_1\times g_2\right)$, 
$\alpha$ and $\beta$ constants.
\end{proposition}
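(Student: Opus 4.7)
The plan is to prove each of the four properties directly from the defining Riemann--Stieltjes $\Delta$-sum \eqref{eq:riem_sum1}: for each claim I compute the sum $\overline{S}_{g_1,g_2}(f,P_1,P_2,X_1,X_2)$ explicitly, show that for every partition and every selection it either takes the claimed value or can be made arbitrarily close to it, and then conclude by uniqueness of the integral $\mathcal{I}$. This mirrors the strategy used for Proposition~\ref{prop} and is permitted precisely because, as the authors indicate, the one-variable arguments of \cite{MPT} transfer \emph{mutatis mutandis}.

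For part (a), with $f \equiv A$ the double $\Delta$-sum factors as
\begin{equation*}
\overline{S}_{g_1,g_2}(A,P_1,P_2,X_1,X_2)
= A \left(\sum_{i=1}^{n}(g_1(t_i)-g_1(t_{i-1}))\right)\left(\sum_{j=1}^{k}(g_2(s_j)-g_2(s_{j-1}))\right),
\end{equation*}
and both sums telescope to $g_1(b)-g_1(a)$ and $g_2(d)-g_2(c)$, respectively, giving the claimed value for \emph{every} pair of partitions. For part (b), if $g_1$ is constant every factor $g_1(t_i)-g_1(t_{i-1})$ vanishes, so $\overline{S}_{g_1,g_2}\equiv 0$; the case of constant $g_2$ is identical. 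In both cases $\mathcal{I}$ is determined and equals the asserted value.

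For part (c), the intervals $[a,\sigma_1(a)]_{\T_1}$ and $[c,\sigma_2(c)]_{\T_2}$ admit only the trivial partitions $P_1=\{a,\sigma_1(a)\}$, $P_2=\{c,\sigma_2(c)\}$, and then the only admissible selections are $X_1=\{a\}$ and $X_2=\{c\}$ since $[a,\sigma_1(a))_{\T_1}=\{a\}$ and $[c,\sigma_2(c))_{\T_2}=\{c\}$. There is therefore a unique Riemann--Stieltjes $\Delta$-sum, namely $f(a,c)(g_1^{\sigma_1}(a)-g_1(a))(g_2^{\sigma_2}(c)-g_2(c))$, which must be the value of the integral.

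For part (d), the sum \eqref{eq:riem_sum1} is linear in the integrand and in each of the increments $\Delta g_{1,i}$, $\Delta g_{2,j}$, so interpreting $\beta(g_1\times g_2)$ as the product measure with one factor rescaled by $\beta$ gives
\begin{equation*}
\overline{S}_{g_1,\beta g_2}(\alpha f,P_1,P_2,X_1,X_2) = \alpha\beta\, \overline{S}_{g_1,g_2}(f,P_1,P_2,X_1,X_2).
\end{equation*}
Given $\varepsilon>0$, choose the common refinement of the two partitions $P_1^*,P_2^*$ that approximate $\iint_R f\,\Delta_{1,2}(g_1\times g_2)$ to within $\varepsilon/(|\alpha\beta|+1)$; the displayed equality then shows the left-hand Riemann--Stieltjes $\Delta$-sums approximate $\alpha\beta \iint_R f\,\Delta_{1,2}(g_1\times g_2)$ to within $\varepsilon$, yielding the identity by uniqueness of $\mathcal{I}$. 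The only genuinely subtle point in the whole argument is fixing the convention for $\beta(g_1\times g_2)$ so that the scaling in (d) is unambiguous; once that convention is in place, every step is a direct transcription of the one-variable case.
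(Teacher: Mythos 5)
Your proof is correct and follows essentially the route the paper intends: the authors give no explicit proof, stating only that the result is obtained \emph{mutatis mutandis} from the one-variable properties (which themselves ``follow directly'' from the Riemann--Stieltjes $\Delta$-sum and the definition of the integral), and your direct computation of $\overline{S}_{g_1,g_2}$ for each of (a)--(d) is precisely that transcription. Your explicit handling of the unique partition and selection in (c) and of the scaling convention in (d) are welcome details the paper leaves implicit.
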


In the classical case, \textrm{i.e.}, when $\T_1=\T_2=\R$,
the Fubini theorem is the fundamental theorem that relates
double and iterated integrals (see, \textrm{e.g.}, \cite{book:leb-st}).
The rule of iterated integration
for double Riemann $\Delta$-integrals on a rectangle was proved
in \cite[Theorem~3.10]{multiple_int}.
We extend here \cite[Theorem~3.10]{multiple_int}
to the double Riemann--Stieltjes $\Delta$-integral.

\begin{proposition}
\label{fubini}
Let $g_i:\T_i\rightarrow\R$, $i=1,2$, be two non-decreasing
functions on $[a,b]_{\T_1}$ and $[c,d]_{\T_2}$, respectively.
Let us assume that function $f: \T_1\times\T_2\rightarrow \R$
is bounded on the set $R=[a,b)_{\T_1}\times [c,d)_{\T_2}$.
Then, the existence of the integral
\begin{equation*}
 \iint_R |f| \Delta_{1,2} (g_1\times g_2)
\end{equation*}
implies the existence and the equality of the iterated integrals:
\begin{equation}
\label{th:f}
\begin{split}
\iint_R f \Delta_{1,2} (g_1\times g_2)
&= \int_a^b \left( \int_c^d f(t,s)\Delta_2 g_2(s)\right)\Delta_1 g_1(t)\\
&= \int_c^d \left( \int_a^b f(t,s)\Delta_1 g_1(t)\right)\Delta_2 g_2(s)\, .
\end{split}
\end{equation}
\end{proposition}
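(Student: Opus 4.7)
The plan is to mimic the classical Darboux proof of Fubini's theorem, adapted to Darboux--Stieltjes sums on a rectangle built over two time scales. The strategy is a two-stage sandwich: first I control the one-variable upper and lower Stieltjes integrals along slices of $f$, and then I assemble these slice controls into bounds that squeeze the iterated integral between the lower and upper double Darboux--Stieltjes sums associated with a product partition.

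Concretely, take product partitions $P_1 = \{t_0,\dots,t_n\}$ of $[a,b]_{\T_1}$ and $P_2 = \{s_0,\dots,s_k\}$ of $[c,d]_{\T_2}$, set $R_{ij} = [t_{i-1},t_i)_{\T_1} \times [s_{j-1},s_j)_{\T_2}$, and put $m_{ij} = \inf_{R_{ij}} f$, $M_{ij} = \sup_{R_{ij}} f$. Introduce the upper and lower slice integrals
\[
F_*(t) := \underline{\int_c^d} f(t,s)\,\Delta_2 g_2(s), \qquad F^*(t) := \overline{\int_c^d} f(t,s)\,\Delta_2 g_2(s).
\]
For any $t \in [t_{i-1}, t_i)_{\T_1}$ and each $j$, one has $m_{ij} \le \inf_{s \in [s_{j-1},s_j)_{\T_2}} f(t,s)$ and $\sup_{s \in [s_{j-1}, s_j)_{\T_2}} f(t,s) \le M_{ij}$, so
\[
\sum_{j=1}^k m_{ij} \bigl(g_2(s_j)-g_2(s_{j-1})\bigr) \le F_*(t) \le F^*(t) \le \sum_{j=1}^k M_{ij}\bigl(g_2(s_j)-g_2(s_{j-1})\bigr).
\]
Since the outer bounds are independent of $t$ on $[t_{i-1}, t_i)_{\T_1}$, multiplying by $g_1(t_i)-g_1(t_{i-1}) \ge 0$, passing to $\inf_t F_*$ and $\sup_t F^*$ on each subinterval, and summing over $i$ yields the key two-sided estimate
\[
L(P_1 \times P_2, f, g_1, g_2) \le L(P_1, F_*, g_1) \le U(P_1, F^*, g_1) \le U(P_1 \times P_2, f, g_1, g_2).
\]

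From the hypothesis that $\iint_R |f|\,\Delta_{1,2}(g_1 \times g_2)$ exists I would then derive integrability of $f$ itself and conclude that $U(P_1\times P_2, f, g_1, g_2) - L(P_1\times P_2, f, g_1, g_2)$ can be made smaller than any prescribed $\varepsilon$ by refining the partitions. The sandwich above then pins both $F_*$ and $F^*$ to the common value $\iint_R f\,\Delta_{1,2}(g_1 \times g_2)$, forcing $F_* = F^*$ in the sense needed to make the iterated integral well defined and identifying it with the double integral. The second identity in \eqref{th:f} is then obtained by the symmetric argument with the roles of $(\T_1, g_1, P_1)$ and $(\T_2, g_2, P_2)$ exchanged.

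The main obstacle is the existence of the inner integral $\int_c^d f(t,s)\,\Delta_2 g_2(s)$ for sufficiently many fixed $t$: there is no a priori reason why the slice $f(t,\cdot)$ should be Riemann--Stieltjes integrable with respect to $g_2$. The remedy is to carry the argument with the separate upper and lower slice integrals $F^*$ and $F_*$ throughout, extracting their equality only at the end from the double Darboux squeeze. A secondary subtlety is the passage from integrability of $|f|$ to that of $f$, needed to drive the double oscillation $U - L$ to zero; I would handle this by a direct comparison of Darboux sums of $f$ and of $|f|$ on each sub-rectangle, rather than invoking any almost-everywhere continuity criterion, which is delicate in the time-scales setting.
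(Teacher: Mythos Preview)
Your Darboux sandwich is a genuinely different route from the paper's: the paper works directly with Riemann--Stieltjes sums \eqref{eq:riem_sum1}, tacitly assumes the inner integrals $\Phi(x)=\int_c^d f(x,s)\,\Delta_2 g_2(s)$ exist, and then shows the double sum is close both to the double integral and to a Riemann--Stieltjes sum of $\Phi$. Your device of carrying the upper and lower slice integrals $F^*,F_*$ throughout is more careful on exactly this point, since it yields existence of the inner integral as a conclusion rather than a hidden hypothesis.

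There is, however, a real gap in your final step. The passage ``integrability of $|f|$ $\Rightarrow$ integrability of $f$'' cannot be obtained by a direct comparison of Darboux sums: on any sub-rectangle $R_{ij}$ one has
\[
\sup_{R_{ij}}|f|-\inf_{R_{ij}}|f|\ \le\ \sup_{R_{ij}}f-\inf_{R_{ij}}f
\]
(because $\bigl||f(p)|-|f(q)|\bigr|\le|f(p)-f(q)|$), so the oscillation inequality goes the wrong way for your purpose. Smallness of $U-L$ for $|f|$ does not force smallness of $U-L$ for $f$. Concretely, take $\T_1=\T_2=\R$, $g_i(t)=t$, and $f(t,s)=1$ if $t\in\mathbb{Q}$, $f(t,s)=-1$ otherwise: then $|f|\equiv 1$ is trivially double-integrable, while $f$ is not, and neither the double integral nor the outer iterated integral in \eqref{th:f} exists. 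So the implication you propose to prove is simply false in the Riemann--Stieltjes framework, and no sub-rectangle comparison can rescue it. (The paper's own argument never actually invokes the $|f|$ hypothesis, which suggests the intended hypothesis is integrability of $f$ itself; under that reading your sandwich argument goes through cleanly.)
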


\begin{proof}
Let us begin noticing that if one of the functions
$g_1$ or $g_2$ is constant, then relation \eqref{th:f}
gives the truism zero equals zero.
Assume now that none of the functions
$g_1$ and $g_2$ is constant.
As it is usually done in the classical double integral calculus,
the evaluation of a double Stieltjes integral can be reduced
to the successive evaluation of two simple Stieltjes integrals.
Let $P_1\in \mathcal{P}([a,b]_{\T_1})$
and $P_2\in \mathcal{P}([c,d]_{\T_2})$ where, as in the introduction to this section, we use
$P_1=\{t_0, t_1,\ldots, t_n\}$, $P_2=\{s_0, s_1,\ldots, s_k\}$, $X_1=\{x_1,\ldots,x_n\}$,
$X_2=\{y_1,\ldots,y_k\}$, with $x_i\in [t_{i-1},t_i)_{\T_1}$, $i=1,2,\ldots, n$,
and $y_j \in [s_{j-1},s_j)_{\T_2}$, $j=1,2,\ldots, k$.
We can assume that $P_1$ is such that
$\sum_{i=1}^n\left(g_1(t_i)-g_1(t_{i-1})\right)>0$, as $g_1$ is not constant.
According to definition \eqref{eq:riem_sum1} of
Riemann--Stieltjes $\Delta$-sum we can write
\[\overline{S}_{g_1,g_2}(f, P_1, P_2, X_1, X_2)
=\sum_{i=1}^n\left(g_1(t_i)
-g_1(t_{i-1})\right)\sum_{j=1}^kf(x_i,y_j)\left(g_2(s_j)-g_2(s_{j-1})\right)\,.\]
Let us now denote by $\Phi(x_{i-1})=\int_c^df(x_{i-1},s)\Delta_2g_2(s)$ the simple Stieltjes integral
of the function $f(x_{i-1},\cdot)$ with respect to $g_2$ on the interval $[c,d]_{\T_2}$.
Using Definition~\ref{def:s1} we can write that for every
\[\overline{\varepsilon}=\frac{\varepsilon}{2\sum_{i=1}^n\left(g_1(t_i)-g_1(t_{i-1})\right)}>0\, ,\]
$\varepsilon>0$, there is a partition
$P_2^*$ such that for all refinement
$P_2\supset P_2^*$ with a selection $X_2$ we have that
\[\left|S_{g_2}(f(x_{i-1},\cdot), P_2, X_2)-\Phi(x_{i-1})\right|<  \overline{\varepsilon} \,. \]
For any partition $P_1$ of $[a,b]_{\T_1}$
with some selection $X_1$ the following holds:
\[\left|\overline{S}_{g_1,g_2}(f, P_1,P_2, X_1, X_2)
-\sum_{i=1}^n\left(g_1(t_i)-g_1(t_{i-1})\right)\Phi(x_{i-1})\right|<
\frac{\varepsilon}{2}\, . \]
It is easy to notice that the sum
$\sum_{i=1}^n\left(g_1(t_i)-g_1(t_{i-1})\right)\Phi(x_{i-1})$
represents a Riemann--Stieltjes $\Delta$-sum
for the integral $\int_a^b\Phi(t)\Delta_1g_1(t)$.
Let $\mathcal{I}=\iint_R f \Delta_{1,2} (g_1\times g_2)$.
Using again the definition in \cite{MPT}
of the simple Stieltjes delta integral on $[a,b]_{\T_1}$,
we see that for all $\varepsilon/2>0$
there is a partition $P_1^*$ such that for all refinements
$P_1\supset P_1^*$ together with all possible selections $X_1$
the following holds:
\[
\left|\overline{S}_{g_1,g_2}(f, P_1,P_2, X_1, X_2)
-\mathcal{I}\right| < \frac{\varepsilon}{2}\, .
\]
Hence,
\[\left|\mathcal{I}-\sum_{i=1}^n\left(g_1(t_i)
-g_1(t_{i-1})\right)\Phi(x_{i-1})\right|<\varepsilon \]
and $\iint_R f \Delta_{1,2} (g_1\times g_2)
= \int_a^b \left( \int_c^d f(t,s)\Delta_2 g_2(s)\right)\Delta_1 g_1(t)$.
Similarly, if we proceed in the reverse order we get the analogous formula
$\iint_R f \Delta_{1,2} (g_1\times g_2)
= \int_c^d \left( \int_a^b f(t,s)\Delta_1 g_1(t)\right)\Delta_2 g_2(s)$.
\end{proof}


\subsection{Inequalities for Riemann--Stieltjes delta integrals}
\label{subsec:ineq}

In what follows $g:\T\rightarrow\R$
is a non-decreasing function
on the interval $[a,b]_{\T}$.

\begin{proposition}\label{lem:pos}
Let $f:\T\rightarrow\R$ be Riemann--Stieltjes $\Delta$-integrable
on $[a,b]_{\T}$ with respect to a non-decreasing function $g$.
If $f$ is nonnegative on $[a,b]_{\T}$, then
\begin{equation*}
\int_a^b f(t)\Delta g(t)\geq 0\, .
\end{equation*}
\end{proposition}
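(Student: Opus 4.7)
The proof plan is to reduce the statement to the nonnegativity of every Riemann--Stieltjes $\Delta$-sum and then transfer that nonnegativity to the integral via the definition of integrability.

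First I would fix an arbitrary partition $P = \{t_0, \ldots, t_n\}$ of $[a,b]_{\T}$ and an arbitrary selection of points $X = \{x_1,\ldots,x_n\}$ with $x_i \in [t_{i-1}, t_i)_{\T}$, and examine the Riemann--Stieltjes $\Delta$-sum \eqref{eq:riem_sum}. Since $f \geq 0$ on $[a,b]_{\T}$ we have $f(x_i) \geq 0$, and since $g$ is non-decreasing we have $g(t_i) - g(t_{i-1}) \geq 0$. Therefore every term of the sum is nonnegative, hence $S_g(f, P, X) \geq 0$ for every $P$ and every selection $X$.

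Next I would invoke Definition~\ref{def:s1}, which grants a value $\mathcal{I} = \int_a^b f(t)\Delta g(t) \in \R$ with the property that for every $\varepsilon > 0$ there is a partition $P^*$ such that $|S_g(f,P,X) - \mathcal{I}| < \varepsilon$ for all refinements $P \supset P^*$ and all selections $X$. Suppose for contradiction that $\mathcal{I} < 0$. Choose $\varepsilon = -\mathcal{I}/2 > 0$; then any refinement $P \supset P^*$ with any selection $X$ would satisfy $S_g(f,P,X) < \mathcal{I} + \varepsilon = \mathcal{I}/2 < 0$, contradicting the nonnegativity of $S_g(f,P,X)$ established in the previous step. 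Hence $\mathcal{I} \geq 0$, which is the claim.

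There is no real obstacle: the only subtlety is that one must argue by passage to the limit (or, equivalently, by the $\varepsilon$-characterization of the integral) rather than by simply saying ``take limits,'' since the integral is defined via refinements and selections instead of as a direct limit of a net of sums. An entirely equivalent alternative would be to observe that $L(P,f,g) = \sum_{i=1}^n m_i \Delta g_i \geq 0$ because $m_i = \inf_{[t_{i-1},t_i)_{\T}} f \geq 0$ and $\Delta g_i \geq 0$, whence $\underline{\int_a^b} f(t)\Delta g(t) \geq 0$, and then use integrability of $f$ with respect to $g$ to conclude $\int_a^b f(t)\Delta g(t) \geq 0$. I would present the Riemann--Stieltjes sum argument, as it matches Definition~\ref{def:s1} used throughout this section.
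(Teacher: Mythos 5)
Your proof is correct and rests on the same idea as the paper's: every approximating sum is nonnegative because $f\geq 0$ and $g$ is non-decreasing, so the integral is nonnegative. The paper states this in one line via lower Darboux--Stieltjes sums, $\int_a^b f(t)\Delta g(t)\geq L(P,f,g)\geq 0$, which is exactly the ``equivalent alternative'' you mention at the end; your $\varepsilon$-argument from Definition~\ref{def:s1} is just a slightly more verbose formalization of the same passage to the limit.
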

\begin{proof}
If $f$ is a nonnegative function, then for any partition
$P\in \mathcal{P}([a,b]_{\T})$ we have
$\int_a^bf(t)\Delta g(t)\geq L(P,f,g)\geq 0$.
\end{proof}

\begin{corollary}\label{cor:pos}
Let $f_1,f_2:\T\rightarrow\R$ be Riemann--Stieltjes delta integrable
on $[a,b]_{\T}$ with respect to a non-decreasing function $g$.
Suppose that $f_1(t)\geq f_2(t)$ for all $t\in[a,b]_{\T}$. Then,
\begin{equation*}
\int_a^bf_1(t)\Delta g(t)\geq
\int_a^b f_2(t)\Delta g(t) \, .
\end{equation*}
\end{corollary}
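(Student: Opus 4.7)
The plan is to reduce Corollary~\ref{cor:pos} to Proposition~\ref{lem:pos} by considering the auxiliary function $h := f_1 - f_2$, which is nonnegative on $[a,b]_{\T}$ by hypothesis. If we can show that $h$ is Riemann--Stieltjes $\Delta$-integrable with respect to $g$ and that
\[
\int_a^b h(t)\,\Delta g(t) = \int_a^b f_1(t)\,\Delta g(t) - \int_a^b f_2(t)\,\Delta g(t),
\]
then Proposition~\ref{lem:pos} applied to $h$ yields the desired inequality immediately.

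First I would establish this linearity (additivity) of the Riemann--Stieltjes $\Delta$-integral directly from Definition~\ref{def:s1}. For any partition $P$ and selection $X$, the Riemann--Stieltjes $\Delta$-sum defined in \eqref{eq:riem_sum} is additive in the integrand, i.e.\ $S_g(f_1 - f_2, P, X) = S_g(f_1, P, X) - S_g(f_2, P, X)$. Given $\varepsilon > 0$, by integrability of $f_1$ and $f_2$ one can find partitions $P_1^*$ and $P_2^*$ such that the corresponding $\Delta$-sums approximate $\int_a^b f_i \Delta g$ within $\varepsilon/2$ on every refinement. Taking the common refinement $P^* = P_1^* \cup P_2^*$ and applying the triangle inequality shows that $S_g(h, P, X)$ approximates $\int_a^b f_1 \Delta g - \int_a^b f_2 \Delta g$ within $\varepsilon$ for all refinements $P \supset P^*$, so $h \in \mathcal{S}([a,b]_{\T}, g)$ with the expected value.

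Once additivity is in hand, I would conclude by applying Proposition~\ref{lem:pos} to $h = f_1 - f_2 \geq 0$, giving $\int_a^b h(t)\,\Delta g(t) \geq 0$, which rearranges to the claim.

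The only slightly delicate step is the additivity of the integral: Proposition~\ref{prop}(d) records scalar homogeneity but not additivity in the integrand, so strictly speaking this must be justified. However, it is a standard consequence of Definition~\ref{def:s1} via the common refinement argument sketched above, and is already implicit in the development in \cite{MPT}, so it should not present any real obstacle.
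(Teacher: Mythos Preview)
Your proposal is correct and follows essentially the same route as the paper: the paper's proof simply says the result follows from Proposition~\ref{lem:pos} applied to the nonnegative function $f = f_1 - f_2$. You are more explicit than the paper in justifying additivity of the Riemann--Stieltjes $\Delta$-integral, but this is a welcome clarification rather than a different approach.
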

\begin{proof}
The result follows immediately from Proposition~\ref{lem:pos}
and the nonnegativity of function $f(t)=f_1(t)-f_2(t)$.
\end{proof}

Similarly, we can also show the following:

\begin{proposition}
Let $R=[a,b)_{\T_1}\times[c,d)_{\T_2}$ and $f$, $f_1$, and $f_2$
be bounded functions on $R$ satisfying the inequality
$f_1(t,s)\geq f_2(t,s)$ for all $(t,s)\in R$. Then,
\begin{equation*}
\iint_R f_1 \Delta_{1,2} (g_1\times g_2)
\geq \iint_R f_2 \Delta_{1,2} (g_1\times g_2)
\end{equation*}
and
\begin{equation*}
\left|\iint_R f (t,s)\Delta_{1,2} (g_1\times g_2)\right|
\leq \iint_R |f(t,s)| \Delta_{1,2} (g_1\times g_2)\, .
\end{equation*}
\end{proposition}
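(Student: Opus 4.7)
The plan is to mimic the one-variable argument of Proposition~\ref{lem:pos} and Corollary~\ref{cor:pos}, now at the level of the two-variable Riemann--Stieltjes $\Delta$-sum \eqref{eq:riem_sum1}. First I would establish the auxiliary positivity statement: if $h:R\to\R$ is Riemann--Stieltjes $\Delta$-integrable with respect to $g_1$ and $g_2$ on $R$ and $h\geq 0$ on $R$, then $\iint_R h\,\Delta_{1,2}(g_1\times g_2)\geq 0$. The key observation is that, because $g_1$ and $g_2$ are non-decreasing, each factor $g_1(t_i)-g_1(t_{i-1})$ and $g_2(s_j)-g_2(s_{j-1})$ appearing in \eqref{eq:riem_sum1} is nonnegative; hence every summand of $\overline{S}_{g_1,g_2}(h,P_1,P_2,X_1,X_2)$ is nonnegative, and so is the entire sum, for \emph{any} choice of partitions and selections.

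Given this, let $\mathcal{I}:=\iint_R h\,\Delta_{1,2}(g_1\times g_2)$. By the definition of the double integral, for every $\varepsilon>0$ one can choose partitions $P_1^*$, $P_2^*$ such that for all refinements $P_1\supset P_1^*$, $P_2\supset P_2^*$ and every admissible selection $X_1,X_2$, one has $|\overline{S}_{g_1,g_2}(h,P_1,P_2,X_1,X_2)-\mathcal{I}|<\varepsilon$. Combining this with $\overline{S}_{g_1,g_2}(h,P_1,P_2,X_1,X_2)\geq 0$ yields $\mathcal{I}>-\varepsilon$ for every $\varepsilon>0$, i.e.\ $\mathcal{I}\geq 0$.

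With this in hand, the first inequality of the proposition follows by applying the auxiliary statement to $h=f_1-f_2\geq 0$, using the linearity of the double Riemann--Stieltjes $\Delta$-integral (obtained \emph{mutatis mutandis} from the one-variable case and already invoked in the preceding proposition). For the second inequality I would start from the pointwise bounds $-|f(t,s)|\leq f(t,s)\leq |f(t,s)|$ on $R$, apply the just-proved monotonicity twice, and use linearity to turn $\iint_R (-|f|)\,\Delta_{1,2}(g_1\times g_2)$ into $-\iint_R |f|\,\Delta_{1,2}(g_1\times g_2)$, obtaining
\[
-\iint_R |f(t,s)|\,\Delta_{1,2}(g_1\times g_2)\leq \iint_R f(t,s)\,\Delta_{1,2}(g_1\times g_2)\leq \iint_R |f(t,s)|\,\Delta_{1,2}(g_1\times g_2),
\]
which is the stated absolute value inequality.

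There is no real obstacle here; the only point requiring a moment's care is that the double-integral definition is phrased directly through Riemann--Stieltjes sums rather than upper/lower Darboux sums, so the one-variable proof of Proposition~\ref{lem:pos} (which used $L(P,f,g)\geq 0$) cannot be quoted verbatim. The remedy, as above, is to observe that every Riemann--Stieltjes $\Delta$-sum of a nonnegative integrand against non-decreasing $g_1,g_2$ is itself nonnegative, and then pass to the limit through the $\varepsilon$-definition. Implicitly this also presumes Riemann--Stieltjes $\Delta$-integrability of $f_1-f_2$ and of $|f|$ on $R$, which I would either assume as part of the hypotheses or justify by the same \emph{mutatis mutandis} transfer from the one-variable theory of \cite{MPT}.
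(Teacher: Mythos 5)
Your proof is correct and is essentially the argument the paper intends: the paper gives no proof beyond the remark ``Similarly, we can also show the following,'' and your fleshed-out version (nonnegativity of every two-variable Riemann--Stieltjes $\Delta$-sum for a nonnegative integrand, then the $\varepsilon$-refinement definition, then linearity applied to $f_1-f_2$ and to $|f|\pm f$) is the natural transfer of Proposition~\ref{lem:pos} and Corollary~\ref{cor:pos}. You are also right to flag that integrability of $f_1-f_2$ and of $|f|$ must be assumed, since the statement as printed only requires boundedness.
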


\begin{proposition}
\label{lem:pos1}
Let $f:\T\rightarrow\R$ be Riemann--Stieltjes $\Delta$-integrable
on $[a,b]_{\T}$ with respect to a non-decreasing function $g$.
If $f$ is nonnegative  on $[a,b]_{\T}$, then
\begin{equation*}
F(t)=\int_a^t f(\tau)\Delta g(\tau)
\end{equation*}
is a non-decreasing function on $[a,b]_{\T}$.
\end{proposition}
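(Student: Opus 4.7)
The plan is to reduce monotonicity of $F$ to the sign result of Proposition~\ref{lem:pos} via the standard interval-additivity of the Riemann--Stieltjes $\Delta$-integral. Take arbitrary $t_1, t_2 \in [a,b]_{\T}$ with $t_1 \leq t_2$. The goal is to show $F(t_2) - F(t_1) \geq 0$, which will establish that $F$ is non-decreasing.

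First I would verify that $F$ is well defined on $[a,b]_{\T}$, i.e., that $f$ is Riemann--Stieltjes $\Delta$-integrable with respect to $g$ on every subinterval $[a,t]_{\T}$. This is a standard consequence of the integrability of $f$ on $[a,b]_{\T}$ and follows from the Cauchy-type characterization of integrability via partitions (see \cite{MPT}): any refinement of a partition containing $t$ as a node splits into partitions of $[a,t]_{\T}$ and $[t,b]_{\T}$, and the corresponding Riemann--Stieltjes $\Delta$-sums decompose accordingly. The same argument gives the interval-additivity identity
\begin{equation*}
\int_a^{t_2} f(\tau)\Delta g(\tau) = \int_a^{t_1} f(\tau)\Delta g(\tau) + \int_{t_1}^{t_2} f(\tau)\Delta g(\tau),
\end{equation*}
so that $F(t_2) - F(t_1) = \int_{t_1}^{t_2} f(\tau)\Delta g(\tau)$.

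Since $f$ is nonnegative on $[a,b]_{\T}$, it is in particular nonnegative on $[t_1,t_2]_{\T}$, and $g$ remains non-decreasing on this subinterval. Applying Proposition~\ref{lem:pos} directly to $f$ on $[t_1,t_2]_{\T}$ yields $\int_{t_1}^{t_2} f(\tau)\Delta g(\tau) \geq 0$, hence $F(t_2) \geq F(t_1)$, which is the desired conclusion.

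The only non-routine step is the justification of integrability on subintervals together with the additivity formula, since these are not explicitly recorded in Proposition~\ref{prop}. I would either cite them from \cite{MPT} or sketch them in one line by choosing, for any given $\varepsilon>0$, a partition $P^*$ of $[a,b]_{\T}$ containing $t_1$ and $t_2$ (which can always be done by refining), so that lower and upper Darboux--Stieltjes sums split as sums over $[a,t_1]_{\T}$, $[t_1,t_2]_{\T}$, and $[t_2,b]_{\T}$, and the $\varepsilon$-smallness of the global oscillation $U(P^*,f,g)-L(P^*,f,g)$ forces the same on each piece.
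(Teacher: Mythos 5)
Your proof is correct, but it takes a different route from the paper's. You reduce the claim to Proposition~\ref{lem:pos} via interval-additivity: $F(t_2)-F(t_1)=\int_{t_1}^{t_2} f\,\Delta g\geq 0$, with the only technical debt being integrability of $f$ on subintervals and the additivity identity, both of which are standard and which you correctly identify and sketch (they are indeed available in \cite{MPT}). The paper instead computes the delta derivative of $F$ and shows $F^{\Delta}(t)\geq 0$, splitting into cases: when $g^{\Delta}$ exists and is continuous it invokes Theorem~\ref{dor:transition} to get $F^{\Delta}=fg^{\Delta}\geq 0$; otherwise it treats right-scattered points via $\int_a^{\sigma(t)}f\,\Delta g=\int_a^t f\,\Delta g+f(t)(g^{\sigma}(t)-g(t))$ and right-dense points via a limit, then concludes monotonicity from the sign of the derivative. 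Your argument is the more elementary and the more general of the two: it makes no differentiability assumption on $g$, needs no case analysis on the point classification of $\T$, and does not rely on the (true but itself nontrivial) fact that a nonnegative delta derivative forces a function to be non-decreasing. What the paper's route buys in exchange is an explicit formula for $F^{\Delta}$ at scattered and dense points, which is of independent interest but is not needed for the statement being proved. If you write your version up, do make the subinterval-integrability and additivity step an explicit citation or lemma rather than a parenthetical, since Proposition~\ref{prop} as stated in the paper does not record it.
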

\begin{proof}
If $g$ is $\Delta$-differentiable on $[a,b)_{\T}$, then
Theorem~\ref{dor:transition} states that
$$\int_a^t f(\tau)\Delta g(\tau)=\int_a^t f(\tau)g^{\Delta}(\tau)\Delta \tau \, .$$
Thus, $F^{\Delta}(t)=f(t)g^{\Delta}(t)\geq 0$
and $F$ is a non-decreasing function on $[a,b)_{\T}$. On the other hand, we can use the property
that $$\int_a^{\sigma(t)}f\Delta g=\int_a^{t}f\Delta g+f(t)(g^{\sigma}(t)-g(t))\, .$$
This means that in the case when $t$ is right-scattered then
$$F^{\Delta}(t)=\frac{f(t)(g^{\sigma}(t)-g(t))}{\mu(t)}\geq 0 \, ;$$
in the case when $t$ is right-dense then
$F^{\Delta}(t)=\lim_{s\rightarrow t}\left|\frac{\int_s^tf\Delta g}{t-s}\right|
\geq 0$. Hence, $F$ is non-decreasing.
\end{proof}

Let $I$ be an interval of real numbers and $F: I\rightarrow\R$
be a convex function on $I$. Then $F$ is continuous on $int (I)$
(the interior of $I$) and has finite left and right derivatives ($F'_+$ and $F'_-$)
at each point of $int (I)$. For a convex function $F:I\rightarrow\R$
the subdifferential of $F$ is defined as the set $\partial F$
of all extended functions $\varphi: I\rightarrow\R\cup\{\pm\infty\}$
such that $\varphi(int (I))\subset \R$ and
\begin{equation}
\label{eq:sub}
F(x)\geq F(y)+(x-y)\varphi(y), \ \mbox{for} \ x,y\in I \, .
\end{equation}

When $F$ is convex, then the set $\partial F$ is nonempty because at least
$F'_+$, $F'_-\in \partial F$.  Moreover, if $\varphi \in \partial F$
then $F'_-(x)\leq \varphi(x)\leq F'_+(x)$ for $x\in int (I)$,
and $\varphi$ is a non-decreasing function.
If $x: \T\rightarrow I\subset \R$, then the composition
$F\circ x: \T\rightarrow \R$ is a function on $\T$.

The following result is a generalization of \cite[Theorem~5]{BCD}.

\begin{theorem}
\label{th:ac5}
Let $\T$ be a time scale with $a,b\in\T$,
$F: I\rightarrow\R$ be a convex function on the real interval $I$,
and $x,y,p:[a,b]_{\T}\rightarrow I$ with $p(\cdot)$ nonnegative
on $[a,b]_{\T}$. If $\varphi\in \partial F$ and $g:[a,b]_{\T}\rightarrow I$
is a non-decreasing function on $[a,\rho(b)]_{\T}$,
then the inequality
\begin{multline}
\label{eq:ineq1}
\int_a^b p(t)F(x(t))\Delta g(t)-\int_a^b p(t)F(y(t))\Delta g(t)\\
\geq \int_a^b p(t)x(t)\varphi(y(t))\Delta g(t)
-\int_a^b p(t)y(t)\varphi(y(t))\Delta g(t)
\end{multline}
holds assuming that the Riemann--Stieltjes
$\Delta$-integrals in \eqref{eq:ineq1} exist.
\end{theorem}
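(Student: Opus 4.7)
The plan is to obtain \eqref{eq:ineq1} simply as the integrated form of the pointwise subgradient inequality \eqref{eq:sub}. Since $x(t),y(t)\in I$ for every $t$ and $\varphi\in\partial F$, \eqref{eq:sub} applied with the arguments $x:=x(t)$ and $y:=y(t)$ gives
$$F(x(t))-F(y(t))\geq (x(t)-y(t))\varphi(y(t))$$
for every $t\in[a,b]_{\T}$. Multiplying by the nonnegative weight $p(t)$ preserves the direction of the inequality and, after rearranging, yields the pointwise estimate
$$p(t)F(x(t))-p(t)F(y(t))\geq p(t)x(t)\varphi(y(t))-p(t)y(t)\varphi(y(t)).$$

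Next, I would integrate both sides over $[a,b]_{\T}$ against $\Delta g$. Because $g$ is non-decreasing and the four Riemann--Stieltjes $\Delta$-integrals appearing in \eqref{eq:ineq1} are assumed to exist, Corollary~\ref{cor:pos} applies directly to the two sides above, so integration preserves the inequality. Finally, splitting the integral of each side into the corresponding pair of Riemann--Stieltjes $\Delta$-integrals by linearity (the $\alpha=\beta=1$ case of Proposition~\ref{prop}(d), together with additivity in the integrand) reproduces exactly \eqref{eq:ineq1}.

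There is essentially no analytic obstacle: the theorem is the defining subgradient inequality promoted by a positive linear functional, with the ``measure'' here being the Riemann--Stieltjes $\Delta$-integral against a non-decreasing $g$ weighted by the nonnegative function $p$. The two facts that need to be invoked, namely that multiplication by $p\geq 0$ and integration against $\Delta g$ both preserve inequalities, are exactly Proposition~\ref{lem:pos} and Corollary~\ref{cor:pos}; and the splitting of each side into two integrals is covered by the linearity of the Riemann--Stieltjes $\Delta$-integral together with the integrability hypothesis. No convergence, compactness, or Fubini-type argument is required, so the proof is essentially a one-line pointwise estimate followed by integration against $\Delta g$.
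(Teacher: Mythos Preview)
Your proof is correct and follows essentially the same approach as the paper: apply the pointwise subgradient inequality \eqref{eq:sub}, multiply by $p(t)\geq 0$, and integrate against $\Delta g$ using Corollary~\ref{cor:pos}. The only difference is cosmetic---you spell out the linearity step for splitting the integrals, while the paper leaves that implicit.
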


\begin{proof}
For all $t\in[a,b]_{\T}$ we have $x(t)$, $y(t)\in I$. From inequality
\eqref{eq:sub} we conclude that $F(x(t))-F(y(t))\geq (x(t)-y(t))\varphi(y(t))$.
Multiplying by nonnegative values $p(t)$
and integrating with respect to the non-decreasing function $g$,
we arrive to \eqref{eq:ineq1} with the help of Corollary~\ref{cor:pos}.
\end{proof}

We can use inequality \eqref{eq:ineq1}
of Theorem~\ref{th:ac5} to prove a new
Jensen's type inequality on time scales \cite{SA:F:T}
for Riemann--Stieltjes integrals.

\begin{corollary}
\label{prop:Jensen}
Let $\T$ be a time scale with $a,b\in\T$;
$F: I\rightarrow\R$ be a convex function on $I$;
$x, p:[a,b]_{\T}\rightarrow I$ be
rd-continuous with $p(\cdot)$ nonnegative on $[a,b]_{\T}$;
and $g:[a,b]_{\T}\rightarrow I$
be non-decreasing on $[a,\rho(b)]_{\T}$.
Define $A := \int_a^b p(t)\Delta g(t)>0$.
Then,
\begin{equation*}
\frac{1}{A}\int_a^b p(t)F(x(t))\Delta g(t)
\geq F\left(\frac{1}{A}\int_a^b p(t)x(t)\Delta g(t)\right)
\end{equation*}
provided both integrals exist.
\end{corollary}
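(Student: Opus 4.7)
The plan is to derive the Jensen-type inequality as a direct application of Theorem~\ref{th:ac5} with a carefully chosen constant function in place of $y$. Set
\[
c := \frac{1}{A}\int_a^b p(t)x(t)\Delta g(t),
\]
and take $y(t)\equiv c$ on $[a,b]_{\T}$. First I would verify that $c\in I$: since $x(t)\in I$ for all $t\in[a,b]_{\T}$, one has $\inf_I\le x(t)\le \sup_I$, so by Corollary~\ref{cor:pos} (applied to $p(t)x(t)$ bounded between $p(t)\inf_I$ and $p(t)\sup_I$) and the normalization by $A>0$, $c$ lies in $I$. If $c$ happens to lie on the boundary of $I$, a standard argument shows $x(t)=c$ on the support of $p$ with respect to $g$, in which case both sides of the desired inequality equal $F(c)$. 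Otherwise $c\in\Inte(I)$, so any $\varphi\in\partial F$ takes a finite value $\varphi(c)\in\R$.

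Next I would check the integrability hypotheses of Theorem~\ref{th:ac5}. The constant $y(t)\equiv c$ trivially satisfies $F(y(t))=F(c)$ and $\varphi(y(t))=\varphi(c)$, both constants, so the second and fourth integrals in \eqref{eq:ineq1} reduce to $F(c)A$ and $c\varphi(c)A$ respectively. Rd-continuity of $x$ and $p$, together with continuity of $F$ on $\Inte(I)$, ensures that $p\cdot(F\circ x)$ and $p\cdot x\cdot\varphi(c)$ are rd-continuous and hence Riemann--Stieltjes $\Delta$-integrable with respect to $g$ on $[a,b]_{\T}$.

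Now I would substitute $y(t)\equiv c$ into inequality \eqref{eq:ineq1} of Theorem~\ref{th:ac5}:
\[
\int_a^b p(t)F(x(t))\Delta g(t)-F(c)\,A
\;\ge\;\varphi(c)\int_a^b p(t)x(t)\Delta g(t)-c\,\varphi(c)\,A.
\]
By the choice of $c$, the right-hand side equals $\varphi(c)\bigl(Ac-cA\bigr)=0$. Dividing by $A>0$ and recognising $F(c)$ on the left yields precisely
\[
\frac{1}{A}\int_a^b p(t)F(x(t))\Delta g(t)\;\ge\;F\!\left(\frac{1}{A}\int_a^b p(t)x(t)\Delta g(t)\right),
\]
as required.

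The only genuine obstacle is the boundary case $c\in\partial I$, where a $\varphi\in\partial F$ may be $\pm\infty$ and Theorem~\ref{th:ac5} does not immediately apply; this is handled by the separate observation above that in this case $x$ is effectively constant on the support of $p\,\Delta g$, making the inequality trivially an equality. Everything else is a one-line substitution of a constant function into \eqref{eq:ineq1}.
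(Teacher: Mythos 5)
Your proof is correct and follows essentially the same route as the paper: take $y$ to be the constant equal to the weighted mean $\frac{1}{A}\int_a^b p(t)x(t)\Delta g(t)$, verify it lies in $I$ by integrating the bounds $p(t)\inf I\le p(t)x(t)\le p(t)\sup I$, and substitute into inequality \eqref{eq:ineq1}, whose right-hand side then vanishes. The only difference is that you explicitly treat the case where the mean falls on the boundary of $I$ (where $\varphi$ may be infinite), a point the paper sidesteps by simply working with $I=[c,d]$; this is extra care rather than a different argument.
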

\begin{proof}
It is enough to take the constant function
$y(s)\equiv \frac{1}{A}\int_a^bp(t)x(t)\Delta g(t)$ for each
$s\in[a,b]_{\T}$, and see that $y(s)\in I$.
We do the proof for $I=[c,d]$.
For $x:[a,b]_{\T}\rightarrow I$ we have
$c p(t) \leq p(t) x(t)\leq d p(t)$. Integrating both sides
with respect to the non-decreasing function $g$ we obtain:
$A c \leq \int_a^b p(t) x(t) \Delta g(t) \leq A d$.
Hence, $c\leq y(s)\leq d$.
Taking into account inequality \eqref{eq:ineq1}
of Theorem~\ref{th:ac5} we get:
\[\frac{1}{A}\int_a^bp(t)F(x(t))\Delta g(t)
\geq F(y(s))+\varphi(y(s))\left(\frac{1}{A}
\int_a^bp(t)x(t)\Delta g(t)-y(s)\right)\, ,\]
where the right-hand side is equal to $F(y(s))$.
\end{proof}

Similarly, one can obtain a Riemann--Stieltjes Jensen's
reverse integral inequality on time scales:

\begin{corollary}
\label{cor:RSJRII}
Let $\T$ be a time scale with $a,b\in\T$;
$F: I\rightarrow\R$ be a continuous convex function
on $I$; $x, p:[a,b]_{\T}\rightarrow I$
be rd-continuous with $p(\cdot)$ nonnegative on $[a,b]_{\T}$;
and $g:[a,b]_{\T}\rightarrow I$ be non-decreasing
on $[a,\rho(b)]_{\T}$ with $A=\int_a^b p(t)\Delta g(t)>0$.
If $\varphi \in \partial F$ and the Riemann--Stieltjes $\Delta$-integrals
$\int_a^b p(t)y(t)\varphi(y(t))\Delta g(t)$ and
$\int_a^b p(t)\varphi(y(t))\Delta g(t)$ exist, then
\begin{equation*}
\begin{split}
0 &\leq \frac{1}{A}\int_a^b p(t)F(y(t))\Delta g(t)
- F\left(\frac{1}{A}\int_a^b p(t)y(t)\Delta g(t)\right)\\
&\leq \frac{1}{A}\left(\int_a^bp(t)y(t)\varphi(y(t))\Delta g(t)
-\frac{1}{A}\int_a^b p(t)y(t)\Delta g(t) \cdot \int_a^bp(t)\varphi(y(t))\Delta g(t)\right)\, .
\end{split}
\end{equation*}
\end{corollary}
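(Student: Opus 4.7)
The plan is to establish the two inequalities separately. The left-hand inequality is just Jensen's inequality on time scales, which is Corollary~\ref{prop:Jensen} applied with $y$ in the role of $x$; the hypotheses on $x$ (rd-continuity and values in $I$) transfer directly to $y$, so nothing further is required on that side. The right-hand inequality will come from Theorem~\ref{th:ac5} through a careful choice of the functions appearing in \eqref{eq:ineq1}.

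Specifically, I would apply Theorem~\ref{th:ac5} keeping $y$, $p$, $g$, $F$, $\varphi$ as in the hypothesis and taking $x(t)$ to be the constant function
\[
\bar y := \frac{1}{A}\int_a^b p(t) y(t)\Delta g(t).
\]
For this to be legitimate I need $\bar y \in I$. Writing $I=[c,d]$ (the other cases being analogous), the bracketing $c\,p(t)\leq p(t)y(t)\leq d\,p(t)$ integrated against $g$ and divided by $A>0$ forces $c\leq \bar y\leq d$, exactly as in the proof of Corollary~\ref{prop:Jensen}. Once $x\equiv \bar y$ is inserted into \eqref{eq:ineq1}, the $x$-dependent terms collapse to $A F(\bar y)$ and $\bar y\int_a^b p(t)\varphi(y(t))\Delta g(t)$, producing
\[
A F(\bar y) - \int_a^b p(t) F(y(t))\Delta g(t)
\geq \bar y\int_a^b p(t)\varphi(y(t))\Delta g(t) - \int_a^b p(t) y(t)\varphi(y(t))\Delta g(t).
\]
Multiplying by $-1$, dividing by $A>0$, and substituting the definition of $\bar y$ back into the product on the right-hand side delivers exactly the claimed upper estimate.

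The only conceptual obstacle is the verification that $\bar y$ lies in $I$, so that $F(\bar y)$ is defined and Theorem~\ref{th:ac5} can be applied with the constant choice; this is the bracketing argument above, with continuity of $F$ invoked only to handle endpoints when $I$ is not closed. Everything else is elementary manipulation of \eqref{eq:ineq1} using the positivity of $A$, and the hypothesized existence of the Riemann--Stieltjes integrals of $p\,y\,(\varphi\circ y)$ and $p\,(\varphi\circ y)$ guarantees that all quantities involved are finite.
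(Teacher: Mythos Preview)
Your proposal is correct and is exactly the argument the paper has in mind: the text offers no explicit proof beyond the word ``Similarly'' after Corollary~\ref{prop:Jensen}, and the intended similarity is precisely what you carry out---swap the roles of $x$ and $y$ in Theorem~\ref{th:ac5} by taking $x$ to be the constant $\bar y=\frac{1}{A}\int_a^b p(t)y(t)\Delta g(t)$, verify $\bar y\in I$ via the same bracketing used in Corollary~\ref{prop:Jensen}, and unwind \eqref{eq:ineq1}. Your write-up is in fact more complete than the paper's, which omits these details entirely.
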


\begin{remark}
Corollary~\ref{cor:RSJRII} coincides with \cite[Corollary~2]{BCD}
in the particular case when $\T = \R$.
\end{remark}

Using the Riemann--Stieltjes double integral we can prove an inequality
of \v{C}eby\v{s}ev's type on time scales.
The inequality \eqref{ieq_prop16} of Proposition~\ref{prop:ceb1}
is motivated by the \v{C}eby\v{s}ev's inequality on time scales
proved in \cite{czebyszew}.

\begin{proposition}
\label{prop:ceb}
Suppose that $p\in C_{rd}\left([a,b]_{\T}\right)$ with
$p(t)\geq 0$ for all $t\in[a,b]_{\T}$,
and let $g:[a,b]_{\T} \rightarrow \R$
be non-decreasing on $[a,\rho(b)]_{\T}$.
Let $f_1$, $f_2\in C_{rd}\left([a,b]_{\T}\right)$
be similarly (oppositely) ordered, that is,
for all $t,s\in [a,b]_{\T}$
\begin{equation*}
\left(f_1(t)-f_1(s)\right)\left(f_2(t)-f_2(s)\right)\geq 0 \, (\leq 0) \, .
\end{equation*}
Then,
\begin{equation}
\label{eq:ceb}
\int_a^b\int_a^b p(t)p(s)\left(f_1(t)-f_1(s)\right)\left(f_2(t)
-f_2(s)\right)\Delta g(t)\Delta g(s) \geq 0 \, (\leq 0) \,.
\end{equation}
\end{proposition}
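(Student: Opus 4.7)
The plan is to apply the classical \v{C}eby\v{s}ev pointwise-sign trick to the integrand and then invoke the positivity of the double Riemann--Stieltjes $\Delta$-integral established earlier. Concretely, set
\[
H(t,s) := p(t)p(s)\bigl(f_1(t)-f_1(s)\bigr)\bigl(f_2(t)-f_2(s)\bigr),
\qquad (t,s)\in R := [a,b)_{\T}\times [a,b)_{\T}.
\]
Since $p \geq 0$ on $[a,b]_{\T}$, we have $p(t)p(s)\geq 0$ for every $(t,s)\in R$; and by the similarly (respectively oppositely) ordered assumption on $f_1,f_2$, the factor $(f_1(t)-f_1(s))(f_2(t)-f_2(s))$ is $\geq 0$ (respectively $\leq 0$) on all of $R$. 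Consequently $H(t,s)\geq 0$ (respectively $\leq 0$) pointwise on $R$.

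Next I would verify integrability so that the monotonicity statement for the double Riemann--Stieltjes $\Delta$-integral (the proposition immediately following Corollary~\ref{cor:pos}) and Proposition~\ref{fubini} can be applied. Because $p,f_1,f_2\in C_{rd}([a,b]_{\T})$ and $[a,b]_{\T}$ is compact, these functions are bounded, and so $H$ is bounded on $R$. For each fixed $s\in[a,b]_{\T}$ the section $t\mapsto H(t,s)$ is rd-continuous, hence Riemann--Stieltjes $\Delta$-integrable with respect to the non-decreasing function $g$; the symmetric statement holds in $s$. This delivers the integrability of $|H|$ with respect to $g\times g$ on $R$, so Proposition~\ref{fubini} ensures that $\iint_R H\,\Delta_{1,2}(g\times g)$ exists and agrees with either iterated integral---in particular with the form $\int_a^b\int_a^b H(t,s)\,\Delta g(t)\Delta g(s)$ appearing in \eqref{eq:ceb}.

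Comparing $H$ with the zero function and applying the double-integral positivity proposition (with $g_1=g_2=g$, $\T_1=\T_2=\T$) then yields
\[
\iint_R H(t,s)\,\Delta_{1,2}(g\times g)\geq 0 \quad (\text{respectively } \leq 0),
\]
which is exactly the inequality \eqref{eq:ceb} once the double integral is rewritten as an iterated $\Delta g(t)\,\Delta g(s)$ integral by Proposition~\ref{fubini}. The only non-trivial point is a bookkeeping one: confirming that rd-continuity plus boundedness on the compact rectangle $R$ indeed supplies the integrability hypothesis required for both the monotonicity statement and Fubini's theorem on time scales. No sharper quantitative estimate is needed, since the key observation---pointwise non-negativity of $H$---is handed to us directly by the ordering assumption on $f_1,f_2$ together with $p\geq 0$.
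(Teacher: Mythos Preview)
Your proposal is correct and follows essentially the same idea as the paper: the integrand $p(t)p(s)(f_1(t)-f_1(s))(f_2(t)-f_2(s))$ has a definite sign by hypothesis, and positivity of the Riemann--Stieltjes $\Delta$-integral then yields \eqref{eq:ceb}. The paper's proof is a one-liner citing Proposition~\ref{lem:pos} (the single-variable positivity, applied iteratively to the inner and then outer integral), whereas you route through the double-integral monotonicity proposition together with Proposition~\ref{fubini}; the extra care you take with integrability is reasonable but not strictly needed beyond what the paper assumes implicitly.
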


\begin{proof}
Follows from Proposition~\ref{lem:pos}.
\end{proof}

\begin{proposition}
\label{prop:ceb1}
Suppose that $p\in C_{rd}\left([a,b]_{\T}\right)$ with $p(t)\geq 0$
for all $t\in[a,b]_{\T}$, and let $g:[a,b]_{\T} \rightarrow \R$
be non-decreasing on $[a,\rho(b)]_{\T}$.
Let $f_1$, $f_2\in C_{rd}\left([a,b]_{\T}\right)$
be similarly (oppositely) ordered.
Then,
\begin{equation}
\label{ieq_prop16}
\int_a^b p(t)\Delta g(t)\int_a^bp(t)f_1(t)f_2(t)\Delta g(t)\geq (\leq)
\int_a^b p(t)f_1(t)\Delta g(t)\int_a^bp(t)f_2(t)\Delta g(t) \,.
\end{equation}
\end{proposition}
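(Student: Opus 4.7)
The plan is to derive \eqref{ieq_prop16} as an algebraic consequence of \eqref{eq:ceb}, by expanding the integrand of the double integral in Proposition~\ref{prop:ceb} and applying Fubini's theorem (Proposition~\ref{fubini}) to collapse each resulting piece into a product of two single Riemann--Stieltjes $\Delta$-integrals. This is the standard route from the symmetric two-variable form to the one-variable Chebyshev form, and it carries over to the time-scale Stieltjes setting because each of the ingredients we need has already been established in the previous sections.

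Concretely, I would first write out the pointwise expansion
\[
p(t)p(s)\bigl(f_1(t)-f_1(s)\bigr)\bigl(f_2(t)-f_2(s)\bigr)
= p(t)p(s)\bigl[f_1(t)f_2(t)+f_1(s)f_2(s)-f_1(t)f_2(s)-f_1(s)f_2(t)\bigr],
\]
integrate both sides over $R=[a,b)_{\T}\times[a,b)_{\T}$ with respect to $g\times g$, and verify that the hypotheses of Proposition~\ref{fubini} hold: since $p,f_1,f_2\in C_{rd}([a,b]_{\T})$ they are bounded on the compact set $[a,b]_{\T}$, so the integrand is bounded on $R$ and the iterated single Stieltjes integrals in $t$ and $s$ exist by standard properties of $\mathcal{S}([a,b]_{\T},g)$. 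Then Proposition~\ref{fubini} lets me rewrite each of the four double integrals as an iterated integral and factor out the variable not being integrated. The two ``diagonal'' terms $p(t)p(s)f_1(t)f_2(t)$ and $p(t)p(s)f_1(s)f_2(s)$ each yield $\bigl(\int_a^b p\,\Delta g\bigr)\bigl(\int_a^b p f_1 f_2\,\Delta g\bigr)$, while the two ``cross'' terms each yield $\bigl(\int_a^b p f_1\,\Delta g\bigr)\bigl(\int_a^b p f_2\,\Delta g\bigr)$.

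Collecting these four contributions gives the identity
\[
\iint_R p(t)p(s)\bigl(f_1(t)-f_1(s)\bigr)\bigl(f_2(t)-f_2(s)\bigr)\,\Delta g(t)\,\Delta g(s)
= 2\left[\int_a^b p\,\Delta g\int_a^b p f_1 f_2\,\Delta g - \int_a^b p f_1\,\Delta g\int_a^b p f_2\,\Delta g\right].
\]
Combining this with \eqref{eq:ceb} of Proposition~\ref{prop:ceb}, which supplies the sign of the left-hand side according to whether $f_1,f_2$ are similarly or oppositely ordered, immediately yields \eqref{ieq_prop16}.

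The main obstacle I anticipate is purely bookkeeping in the Fubini step: one must be careful that the single Stieltjes integrals appearing in the factorization genuinely exist under the given rd-continuity hypotheses, and that the ``slice'' integrals $s\mapsto \int_a^b p(t)f_1(t)f_2(t)\,\Delta g(t)$ (constant in $s$) and similar expressions are Riemann--Stieltjes $\Delta$-integrable against $g$ on $[a,b]_{\T}$. This is immediate from Proposition~\ref{prop} and the basic results of Section~\ref{sec:pre:not}, so the proof reduces, as indicated in the excerpt, to invoking Proposition~\ref{lem:pos} (through Proposition~\ref{prop:ceb}) at the end.
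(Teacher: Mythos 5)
Your proposal is correct and follows essentially the same route as the paper: expand $(f_1(t)-f_1(s))(f_2(t)-f_2(s))$, use the iterated-integral factorization to reduce the double Stieltjes integral to $2\bigl[\int_a^b p\,\Delta g\int_a^b pf_1f_2\,\Delta g-\int_a^b pf_1\,\Delta g\int_a^b pf_2\,\Delta g\bigr]$, and conclude from the sign supplied by Proposition~\ref{prop:ceb}. Your version is in fact slightly more careful than the paper's, since you explicitly justify the Fubini step via Proposition~\ref{fubini} and the boundedness of the rd-continuous integrands.
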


\begin{proof}
We need to rewrite inequality \eqref{eq:ceb} as \eqref{ieq_prop16}.
Because $p$ is a rd-continuous
function on the interval $[a,b]_{\T}$
and $g$ is non-decreasing on $[a,\rho(b)]_{\T}$ (see \cite{MPT}),
function $p$ is Riemann--Stiejtles $\Delta$-integrable
with respect to $g$. Then,
\begin{equation*}
\begin{split}
\int_a^b &\int_a^bp(t)p(s)\left(f_1(t)-f_1(s)\right)\left(f_2(t)-f_2(s)\right)\Delta g(t)\Delta g(s)\\
&= \int_a^bp(s)\int_a^b \left(p(t)f_1(t)f_2(t)-p(t)f_1(t)f_2(s)-p(t)f_1(s)f_2(t)\right.\\
&\qquad\qquad \left. + p(t)f_1(s)f_2(s)\right)\Delta g(t)\Delta g(s)\\
&=  2\left(\int_a^b p(t)\Delta g(s)\int_a^b p(t)f_1(t)f_2(t)\Delta g(t)\right. \\
&\qquad\qquad \left. - \int_a^b p(t)f_1(t)\Delta g(s)\int_a^bp(t)f_2(t)\Delta g(t)\right)\geq 0
\end{split}
\end{equation*}
and the result is proved.
\end{proof}

Corollary~\ref{inverse} gives a Winckler-type formula
for the delta Riemann--Stieltjes integral on time scales.
In the particular case $g(t)=t$ one obtains the result in \cite{czebyszew};
in the case $g(t)=t$ and $\T=\N$ we can easily
obtain the classical Winckler formula:
if $a = \left(a_1,\ldots,a_n\right)$
and $b = \left(b_1,\ldots,b_n\right)$
are similarly (oppositely) ordered, then
\begin{equation*}
\sum_{i=1}^n p_i \sum_{i=1}^n a_i b_i
\geq (\leq) \sum_{i=1}^np_i a_i \sum_{i=1}^np_ib_i \,.
\end{equation*}

\begin{corollary}
\label{inverse}
Let $p\in C_{rd}\left([a,b]_{\T}\right)$ with
$p(t)\geq 0$ for all $t\in[a,b]_{\T}$
and let $g:[a,b]_{\T} \rightarrow \R$
be non-decreasing on $[a,\rho(b)]_{\T}$.
If $f$ and $1/f \in C_{rd}\left([a,b]_{\T}\right)$, then
\begin{equation}
\label{eq:ceb2}
\left(\int_a^b p(t)\Delta g(t)\right)^2
\geq \int_a^b p(t)f(t)\Delta g(t)\int_a^b\frac{p(t)\Delta g(t)}{f(t)} \,.
\end{equation}
\end{corollary}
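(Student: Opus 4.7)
The plan is to derive \eqref{eq:ceb2} as an immediate corollary of the \v{C}eby\v{s}ev--type inequality \eqref{ieq_prop16} in Proposition~\ref{prop:ceb1}, by taking $f_1 = f$ and $f_2 = 1/f$. First, since both $f$ and $1/f$ lie in $C_{rd}([a,b]_{\T})$, the function $f$ never vanishes on $[a,b]_{\T}$; taking $f$ of constant sign (as the ordering hypothesis below will in fact require), the product $f(t)f(s)$ has a fixed sign for every pair $t,s \in [a,b]_{\T}$. The elementary identity
\[
(f(t)-f(s))\left(\frac{1}{f(t)}-\frac{1}{f(s)}\right)
= -\,\frac{(f(t)-f(s))^{2}}{f(t)\,f(s)}
\]
then shows that $f$ and $1/f$ are either similarly or oppositely ordered in the sense of Proposition~\ref{prop:ceb}, so that Proposition~\ref{prop:ceb1} applies to this choice.

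Next, I would substitute $f_1=f$, $f_2=1/f$ into \eqref{ieq_prop16}. Because $f_1(t)f_2(t)\equiv 1$, the factor $\int_a^b p(t) f_1(t) f_2(t) \Delta g(t)$ collapses to $\int_a^b p(t)\Delta g(t)$. Hence the left-hand side of \eqref{ieq_prop16} becomes $\left(\int_a^b p(t)\Delta g(t)\right)^{2}$, while the right-hand side is $\int_a^b p(t) f(t)\Delta g(t)\cdot\int_a^b \frac{p(t)}{f(t)}\Delta g(t)$. This is exactly the content of \eqref{eq:ceb2}.

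The only routine verification I anticipate is the existence of the three Riemann--Stieltjes $\Delta$-integrals involved. Since $p$, $pf$ and $p/f$ are rd-continuous on $[a,b]_{\T}$ and $g$ is non-decreasing on $[a,\rho(b)]_{\T}$, integrability follows from the existence criterion for rd-continuous integrands with respect to a non-decreasing $g$ recalled in \cite{MPT} and already invoked in the proof of Proposition~\ref{prop:ceb1}. I expect this bookkeeping (together with identifying the correct branch---similarly versus oppositely ordered---to match \eqref{eq:ceb2}) to be the only place requiring care; the main inequality itself drops out in essentially one line from Proposition~\ref{prop:ceb1}.
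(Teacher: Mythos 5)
Your route is the same as the paper's (substitute $f_1=f$, $f_2=1/f$ into Proposition~\ref{prop:ceb1} and use $f_1f_2\equiv 1$), but the step you defer as ``routine''---identifying which branch of \eqref{ieq_prop16} applies---is exactly where the argument fails, and your own displayed identity shows why. Since
\[
(f(t)-f(s))\Bigl(\tfrac{1}{f(t)}-\tfrac{1}{f(s)}\Bigr)=-\,\frac{(f(t)-f(s))^2}{f(t)f(s)},
\]
when $f$ has constant sign this quantity is $\le 0$ for all $t,s$, so $f$ and $1/f$ are \emph{oppositely} ordered. Proposition~\ref{prop:ceb1} then yields its $(\le)$ branch,
\[
\int_a^b p\,\Delta g\int_a^b p f_1 f_2\,\Delta g \;\le\; \int_a^b p f_1\,\Delta g\int_a^b p f_2\,\Delta g ,
\]
which with $f_1f_2\equiv 1$ gives $\bigl(\int_a^b p\,\Delta g\bigr)^2\le\int_a^b pf\,\Delta g\cdot\int_a^b \frac{p}{f}\,\Delta g$, i.e.\ the \emph{reverse} of \eqref{eq:ceb2}. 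Replacing $f_2$ by $-1/f$ to obtain similar ordering (this is the paper's ``$f_1f_2=\pm1$'' device) just flips two signs and returns the same conclusion; no choice of branch produces \eqref{eq:ceb2} as stated.

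This is not a defect of your write-up alone: the paper's own proof is the same hand-wave (``$f_1$ and $f_2$ are obviously similarly or oppositely ordered''), and the corollary as printed is false. For $\T=\R$, $g(t)=t$, $p\equiv 1$, $f(t)=t$ on $[1,2]$ one has $\left(\int_1^2 dt\right)^2=1$ while $\int_1^2 t\,dt\cdot\int_1^2 t^{-1}\,dt=\tfrac32\ln 2>1$; more generally the inequality $\left(\int p\,\Delta g\right)^2\le\int pf\,\Delta g\cdot\int (p/f)\,\Delta g$ is just Cauchy--Schwarz applied to $\sqrt{pf}$ and $\sqrt{p/f}$. A correct statement requires reversing the inequality sign in \eqref{eq:ceb2} and assuming $f$ of constant sign: for a sign-changing rd-continuous $f$ (possible on a time scale with scattered points without $1/f$ losing rd-continuity) the pair $f,1/f$ is neither similarly nor oppositely ordered, and Proposition~\ref{prop:ceb1} does not apply at all.
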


\begin{proof}
It is enough to take $f_1=f$ and $f_2=\pm 1/f$
in Proposition~\ref{prop:ceb1}. Indeed, from the assumption
that $f_1$, $f_2\in C_{rd}\left([a,b]_{\T}\right)$
it follows that $f_1(t)f_2(t)=\pm 1$
for each $t\in[a,b]_{\T}$.
Since $f_1$ and $f_2$ are obviously similarly or oppositely ordered,
we end up with inequality \eqref{eq:ceb2}.
\end{proof}

From Corollary~\ref{inverse} we can obtain other
Winckler formulas by choosing different time scales
and different non-decreasing functions $g$ on $\T$:

\begin{example}
Let $\T=\overline{q^{\Z}}$, $q>1$, and $g(t)=t^2$.
Choose $a=0\in\T$ and $b=1\in\T$. We consider the integral
$\int_0^1 p(t)f_1(t)f_2(t)\Delta g(t)$ on this time scale.
The $q$-scale integral is in this case represented by an infinite series:
$$\int_0^1p(t)\Delta g(t) =\sum_{k=1}^{+\infty}p(q^{-k})  q^{-k}(q+1) \, .$$
Let us take $p(t)=t$ and, analogously as in Corollary~\ref{inverse},
consider similarly ordered functions $f_1$ and $f_2$ on $[0,1]_{\T}$
with $f_1(t)f_2(t)=1$. It follows that
$\left(\int_0^1p(t)\Delta g(t)\right)^2 =\frac{1}{(q-1)^2}$ while
$$\int_0^1 p(t)f_1(t)\Delta g(t)\int_0^1p(t)f_2(t)\Delta g(t)=(q+1)^2\sum_{k=1}^{+\infty}
q^{-2k}f_1(q^{-2k})\sum_{k=1}^{+\infty} q^{-2k}f_2(q^{-2k}) \, .$$
Hence,
\[\sum_{k=1}^{+\infty} q^{-2k}f(q^{-2k})\sum_{k=1}^{+\infty}
\frac{q^{-2k}}{f(q^{-2k})}\leq \frac{1}{(q^2-1)^2}\, ,\]
where $f=f_1$.
\end{example}


\subsection{Majorisation theorems}
\label{subsec:mt}

We now extend some majorisation type results
from \cite{BCD,dragomir}.

\begin{theorem}
\label{majorization}
Let $\T$ be a time scale with $a,b\in\T$; functions
$x, y, p, g:[a,b]_{\T}\rightarrow I\subset \R$
be rd-continuous on $[a,b]_{\T}$ with $g$ non-decreasing
and $p$ bounded. Additionally, let $F:I\rightarrow\R$
be a continuous convex function on $I$.
If $y$ and $x-y$ are both non-decreasing or non-increasing and
\begin{equation}
\label{*}
\int_a^bp(t)y(t)\Delta g(t)=\int_a^bp(t)x(t)\Delta g(t) \, ,
\end{equation}
then
\begin{equation}
\label{maj1}
\int_a^bp(t)F(y(t))\Delta g(t)\leq \int_a^bp(t)F(x(t))\Delta g(t)\, .
\end{equation}
\end{theorem}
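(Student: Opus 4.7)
The plan is to bootstrap from the subgradient inequality of Theorem~\ref{th:ac5} and dispose of the resulting cross term by a \v{C}eby\v{s}ev-type argument based on Proposition~\ref{prop:ceb1}. Concretely, I first pick any $\varphi\in\partial F$ (for instance $\varphi=F'_{+}$, which is nonempty since $F$ is convex on $I$) and apply Theorem~\ref{th:ac5} to the data $x$, $y$, $p$, $g$, obtaining
\[
\int_a^b p(t)F(x(t))\Delta g(t)-\int_a^b p(t)F(y(t))\Delta g(t)\geq \int_a^b p(t)(x(t)-y(t))\varphi(y(t))\Delta g(t).
\]
Thus \eqref{maj1} reduces to showing that the right-hand side above is nonnegative.

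Next, I would record a monotonicity observation. As recalled just before Theorem~\ref{th:ac5}, every $\varphi\in\partial F$ is non-decreasing on $I$, so $\varphi\circ y$ inherits the monotonicity of $y$: it is non-decreasing when $y$ is non-decreasing and non-increasing when $y$ is non-increasing. Combined with the hypothesis that $y$ and $x-y$ share the same monotonic behaviour, this makes the functions $f_{1}:=x-y$ and $f_{2}:=\varphi\circ y$ similarly ordered on $[a,b]_{\T}$ in the sense of Proposition~\ref{prop:ceb1}.

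I would then invoke Proposition~\ref{prop:ceb1} (applied with the nonnegative weight $p$ — the implicit standing hypothesis throughout this section) to obtain
\[
\int_a^b p(t)\Delta g(t)\cdot \int_a^b p(t)(x(t)-y(t))\varphi(y(t))\Delta g(t)\geq \int_a^b p(t)(x(t)-y(t))\Delta g(t)\cdot \int_a^b p(t)\varphi(y(t))\Delta g(t).
\]
Hypothesis \eqref{*} forces the first factor on the right to vanish, so the right-hand side equals zero. Provided $\int_a^b p(t)\Delta g(t)>0$ — the degenerate case $\int_a^b p(t)\Delta g(t)=0$ collapses both sides of \eqref{maj1} to zero and is therefore trivial — one divides out and concludes that $\int_a^b p(t)(x(t)-y(t))\varphi(y(t))\Delta g(t)\geq 0$, which together with the first display yields \eqref{maj1}.

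The step I expect to demand the most care is the justification that the various Riemann--Stieltjes $\Delta$-integrals actually exist. The subgradient $\varphi$ is only guaranteed to be monotone on $\mathrm{int}(I)$ and need not be continuous, so $\varphi\circ y$ is generally not rd-continuous; existence of $\int_a^b p(t)\varphi(y(t))\Delta g(t)$ and $\int_a^b p(t)(x(t)-y(t))\varphi(y(t))\Delta g(t)$ must be argued directly. The natural device is to take $\varphi=F'_{+}$, which is right-continuous, bounded on the relevant subinterval, and monotone, and then to observe that the product of a bounded monotone function with an rd-continuous one is Riemann--Stieltjes $\Delta$-integrable against the non-decreasing $g$ via the standard upper/lower Darboux--Stieltjes comparison in Definition~\ref{R-S_def}.
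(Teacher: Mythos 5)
Your proof follows exactly the same route as the paper's: apply Theorem~\ref{th:ac5} with a subgradient $\varphi\in\partial F$, then show the cross term $\int_a^b p(t)(x(t)-y(t))\varphi(y(t))\Delta g(t)$ is nonnegative via Proposition~\ref{prop:ceb1} applied to the similarly ordered pair $f_1=x-y$, $f_2=\varphi\circ y$, using hypothesis \eqref{*} to annihilate the right-hand side. You are in fact slightly more careful than the paper, which passes from $\int_a^b p\,\Delta g\cdot\int_a^b p(x-y)\varphi(y)\,\Delta g\ge 0$ and $\int_a^b p\,\Delta g\ge 0$ to the desired sign without addressing the degenerate case $\int_a^b p\,\Delta g=0$, and which asserts the existence of the integrals involving $\varphi\circ y$ with less justification than you provide.
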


\begin{proof}
The rd-continuity assumptions imply the existence of all integrals
in \eqref{*} and \eqref{maj1}.
Moreover, if $\varphi\in \partial F$,
then both $\varphi$ and $\varphi \circ y$ are non-decreasing
on $[a,b]_{\T}$. Since $p(\cdot)$ is bounded on $[a,b]_{\T}$,
the rd-continuity of $g$ implies the existence
of the Riemann--Stieltjes $\Delta$-integral
$\int_a^bp(t)[x(t)-y(t)]\varphi(y(t))\Delta g(t)$.
Since $g$ is non-decreasing on $[a,b]_{\T}$, then
\eqref{eq:ineq1} implies that
\begin{equation}\label{**}
 \int_a^b p(t)F(x(t))\Delta g(t)-\int_a^b p(t)F(y(t))\Delta g(t)
 \geq \int_a^b p(t)[x(t)-y(t)]\varphi(y(t))\Delta g(t) \, .
\end{equation}
Taking $f_1(t)=x(t)-y(t)$
and $f_2(t)=\varphi(y(t))$ in inequality \eqref{ieq_prop16}
and noting that $f_1$ and $f_2$ are similarly ordered, we obtain:
\begin{multline*}
\int_a^b p(t)\Delta g(t)\int_a^bp(t)[x(t)-y(t)]\varphi(y(t))\Delta g(t)\\
\geq \int_a^b p(t)[x(t)-y(t)]\Delta g(t)\int_a^bp(t)\varphi(y(t))\Delta g(t) \, .
\end{multline*}
Equality \eqref{*} implies that $p(t)[x(t)-y(t)]=0$, so
\[\int_a^b p(t)\Delta g(t)\int_a^bp(t)[x(t)-y(t)]\varphi(y(t))\Delta g(t) \geq 0.\]
From Proposition~\ref{lem:pos} it follows that $\int_a^b p(t)\Delta g(t) \geq 0$. Thus,
\[\int_a^bp(t)[x(t)-y(t)]\varphi(y(t))\Delta g(t) \geq 0 \, . \]
Inequality \eqref{maj1} follows from \eqref{**}.
\end{proof}

\begin{theorem}
Let $\T$ be a time scale with $a,b\in\T$; functions
$x, y, p, g:[a,b]_{\T}\rightarrow I\subset \R$
be rd-continuous on $[a,b]_{\T}$ with $g$ non-decreasing
and $p$ bounded and nonnegative. Additionally, let $F:I\rightarrow\R$
be a non-decreasing continuous and convex function on $I$.
If $y$ and $x-y$ are both non-decreasing or non-increasing and
\begin{equation}
\label{*'}
\int_a^bp(t)y(t)\Delta g(t) \leq \int_a^bp(t)x(t)\Delta g(t) \, ,
\end{equation}
then \eqref{maj1} holds true.
\end{theorem}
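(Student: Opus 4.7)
The plan is to run the proof of Theorem~\ref{majorization} verbatim up to the Čebyšev step and then replace the use of the equality \eqref{*} with an appeal to the extra hypotheses---$F$ non-decreasing, $p$ nonnegative, and the weakened condition \eqref{*'}---in order to keep the right-hand side of the Čebyšev bound nonnegative.

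First I would fix $\varphi \in \partial F$ (concretely $\varphi = F'_+$), apply the subgradient inequality \eqref{eq:sub} pointwise at $x(t)$ and $y(t)$, multiply by the nonnegative weight $p(t)$, and integrate against $\Delta g$ via Corollary~\ref{cor:pos}. As in the proof of Theorem~\ref{majorization}, the rd-continuity assumptions together with the boundedness of $p$ and of $\varphi \circ y$ on $[a,b]_\T$ secure the existence of every integral involved, and the argument reproduces inequality \eqref{**},
\begin{equation*}
\int_a^b p(t)F(x(t))\Delta g(t) - \int_a^b p(t)F(y(t))\Delta g(t) \geq \int_a^b p(t)[x(t)-y(t)]\varphi(y(t))\Delta g(t).
\end{equation*}
It therefore suffices to show that the right-hand side is nonnegative.

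Next I would apply Proposition~\ref{prop:ceb1} with $f_1 := x - y$ and $f_2 := \varphi \circ y$. Because $\varphi$ is non-decreasing (as the subgradient of a convex function) and because $y$ and $x-y$ share monotonicity by hypothesis, $f_1$ and $f_2$ are similarly ordered, so
\begin{equation*}
\int_a^b p(t)\Delta g(t) \int_a^b p(t)[x(t)-y(t)]\varphi(y(t))\Delta g(t) \geq \int_a^b p(t)[x(t)-y(t)]\Delta g(t) \int_a^b p(t)\varphi(y(t))\Delta g(t).
\end{equation*}
The hypothesis \eqref{*'} yields $\int_a^b p(t)[x(t)-y(t)]\Delta g(t) \geq 0$; the assumption that $F$ is non-decreasing forces $\varphi \geq 0$ on the interior of $I$, so Proposition~\ref{lem:pos} gives $\int_a^b p(t)\varphi(y(t))\Delta g(t) \geq 0$; and the same proposition gives $\int_a^b p(t)\Delta g(t) \geq 0$. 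If this last factor is strictly positive, dividing shows $\int_a^b p(t)[x(t)-y(t)]\varphi(y(t))\Delta g(t) \geq 0$, and combining with \eqref{**} delivers \eqref{maj1}. The degenerate case $\int_a^b p(t)\Delta g(t) = 0$ is handled separately: since $F \circ x$ and $F \circ y$ are bounded on the compact $[a,b]_\T$ and $p \geq 0$, both integrals in \eqref{maj1} are squeezed to zero.

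The main obstacle I anticipate is purely technical: verifying that $\varphi \circ y$ is regular enough for Proposition~\ref{prop:ceb1} to apply, since a generic element of $\partial F$ need only be monotone rather than rd-continuous. Selecting the specific one-sided derivative $\varphi = F'_+$, which is right-continuous and monotone on the interior of $I$, and invoking a standard monotone-limit approximation to handle its at-most-countably-many jumps circumvents this issue without altering any of the sign information above.
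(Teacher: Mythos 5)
Your argument is essentially the paper's own: rerun the proof of Theorem~\ref{majorization} to get \eqref{**}, then use Proposition~\ref{prop:ceb1} together with the facts that \eqref{*'} makes $\int_a^b p(t)[x(t)-y(t)]\Delta g(t)\geq 0$ and that $F$ non-decreasing forces $\varphi\geq 0$ (hence, with $p\geq 0$, $\int_a^b p(t)\varphi(y(t))\Delta g(t)\geq 0$), so the \v{C}eby\v{s}ev lower bound is nonnegative and \eqref{maj1} follows. Your treatment is in fact slightly more careful than the paper's, which silently divides by $\int_a^b p(t)\Delta g(t)$ without addressing the degenerate case where that integral vanishes, and which does not comment on the integrability of $\varphi\circ y$.
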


\begin{proof}
The integrals $\int_a^b p(t)[x(t)-y(t)]\Delta g(t)$
and $\int_a^b p(t)\varphi(y(t))\Delta g(t)$ that appear
in the proof of Theorem~\ref{majorization} are nonnegative
because of \eqref{*'} and the monotonicity of $F$ and nonnegativeness of $p$.
Thus, \[\int_a^bp(t)[x(t)-y(t)]\varphi(y(t))\Delta g(t) \geq 0 \, .\]
\end{proof}


\section*{Acknowledgements}

Dorota Mozyrska was partially supported by the
Bia{\l}ystok University of Technology grant S/WI/1/08;
Ewa Paw\l uszewicz and Delfim F. M. Torres
by the {\it Centre for Research on
Optimization and Control} (CEOC) from the {\it Portuguese
Foundation for Science and Technology} (FCT), cofinanced by the
European Community fund FEDER/POCI 2010.


{\small

}


\end{document}